\theoremstyle{plain} 
\newtheorem{thm}{Theorem}[section]
\newtheorem{prop}[thm]{Proposition}
\newtheorem{lem}[thm]{Lemma}
\theoremstyle{definition}
\theoremstyle{remark} 
\newtheorem{rmk}[thm]{Remark}
\newcommand{\R}{\mathbb{R}}
\newcommand{\ep}{\varepsilon}
\newcommand{\p}{\partial}
\newcommand{\be}{\begin{equation}}
\newcommand{\ee}{\end{equation}}
\newcommand{\ba}{\begin{aligned}}
	\newcommand{\ea}{\end{aligned}}
\newcommand{\cLep}{\mathcal L_\ep}
\newcommand{\cX}{\mathcal X}
\newcommand{\ue}{u_\ep}
\newcommand{\ve}{v_\ep}
\newcommand{\we}{w_\ep}
\newcommand{\bv}{\bar v} %{\bar{\mathpzc{v}}}
\newcommand{\bu}{\bar u} %{\bar{\mathpzc{u}}}
\newcommand{\bw}{\bar w} %{\bar{\mathpzc{w}}}
\newcommand{\bp}{\bar p} %{\bar{\mathpzc{p}}}
\newcommand{\tv}{v}%\newcommand{\tv}{\mathpzc{v}}
\newcommand{\tu}{u} %\newcommand{\tu}{\mathpzc{u}}
\newcommand{\tw}{w} %{\mathpzc{w}}
\newcommand{\tx}{\widetilde{x}}
\newcommand{\ty}{\widetilde{y}}
\DeclareFontFamily{OT1}{pzc}{}
\DeclareFontShape{OT1}{pzc}{m}{it}{<-> s * [1.10] pzcmi7t}{}
\DeclareMathAlphabet{\mathpzc}{OT1}{pzc}{m}{it}
\title{Partially congested propagation fronts in one-dimensional Navier-Stokes equations}
\author{Anne-Laure Dalibard\footnote{Sorbonne Université, Université Paris-Diderot SPC, CNRS,  Laboratoire Jacques-Louis Lions, LJLL, F-75005 Paris; dalibard@ann.jussieu.fr} \ and Charlotte Perrin\footnote{Aix Marseille Univ, CNRS, Centrale Marseille, I2M, Marseille, France; charlotte.perrin@univ-amu.fr}}
\begin{document}

\maketitle

\begin{small}

\begin{abstract}
These notes are dedicated to the analysis of the one-dimensional free-congested Navier-Stokes equations. 
After a brief synthesis of the results obtained in~\cite{DP} related to the existence and the asymptotic stability of partially congested profiles associated to the \emph{soft congestion} Navier-Stokes system, we present a first local well-posedness result for the one-dimensional free-congested Navier-Stokes equations.

\end{abstract}

	\bigskip
	\noindent{\bf Keywords:} Navier-Stokes equations, free boundary problem, traveling waves, nonlinear stability.
	
	\medskip
	\noindent{\bf MSC:} 35Q35, 35L67.
\end{small}

%%%%%%%%%%%%%%%%%%%%%%%%%%%%%%%

\section{Introduction}{\label{sec:intro}}
In these notes, we are interested in the following one-dimensional Navier-Stokes system written in Lagrangian coordinates
\begin{subnumcases}{\label{eq:NS-lim}}
\partial_t v - \partial_x u = 0, \label{eq:NS-lim-v}\\
\partial_t u + \partial_x p - \mu \partial_x \left(\dfrac{1}{v}\partial_x u\right) = 0,
\end{subnumcases}
complemented with the \emph{unilateral constraint}
\begin{equation}\label{eq:constraint}
v \geq 1, ~ (v-1) p= 0, ~ p \geq 0.    
\end{equation}
The variable $v$ denotes the specific volume of the fluid and is forced to be greater than $1$, $u$ represents the velocity and $p$ is the pressure associated to the specific volume constraint.
Eventually, the constant $\mu > 0$ represents the viscosity of the fluid.

The analysis of system~\eqref{eq:NS-lim}-\eqref{eq:constraint} is motivated by the modeling of partially \emph{congested} (or saturated) flows like crowd motions or traffic flows \cites{berthelin2017,degond2011,maury2011} or mixtures \cite{bouchut2000} where the constraint $v \geq 1$ can be assimilated to a maximal packing or volume fraction constraint. 
A similar formulation to~\eqref{eq:NS-lim}-\eqref{eq:constraint} was also recently derived for the modeling of partially pressurized free surface flows \cites{lannes2017,godlewski2018}.\\
As developed in previous studies (see for instance~\cite{perrin2015},~\cite{bianchini2020}), equations~\eqref{eq:NS-lim}-\eqref{eq:constraint} can be approximated by the compressible Navier-Stokes equations
\begin{subnumcases}{\label{eq:NS-ep}}
\partial_t v_\ep - \partial_x u_\ep = 0, \\
\partial_t u_\ep + \partial_x p_\ep(v_\ep) - \mu \partial_x \left(\dfrac{1}{v_\ep}\partial_x u_\ep\right) = 0, \\
p_\ep(v) \underset{v \to 1^+}{\to} + \infty.
\end{subnumcases}
with a singular pressure law $p_\ep$ representing repulsive forces that prevent the development of congested phases.
In the previously cited papers, the rigorous justification of the limit $\ep \to 0$ yields the existence of global weak finite energy solutions to \eqref{eq:NS-lim}-\eqref{eq:constraint} (we do not detail the precise setting of these results and refer to~\cite{perrin2015}-\cite{bianchini2020}).
The analysis reveals the multi-scale nature of problem~\eqref{eq:NS-ep}: given a pressure law, {\it e.g.} $p_\ep(v) = \ep (v-1)^{-\gamma}$, one observes that small variations in the specific volume variable (of order $\ep^{1/\gamma}$ here) lead to large variations of the pressure in the highly dense regions where $v$ close to $1$.
Besides, one shows that $\partial_x u = 0$ (or $\mathrm{div}\ u= 0$ in the multi-dimensional case) where $\{v=1\}$ on the limit system~\eqref{eq:NS-lim}-\eqref{eq:constraint}.
Therefore system~\eqref{eq:NS-lim}-\eqref{eq:constraint} can be interpreted as a compressible-incompressible free boundary problem with an interface depending on the solution itself.
This interface is moreover not closed, {\it i.e.} matter passes through the boundary.
Up to our knowledge, few results are known in the literature regarding the existence of more regular solutions to~\eqref{eq:NS-lim}-\eqref{eq:constraint} and the dynamics of the congested domain as time evolves.

\medskip
In these notes we shall focus on particular partially congested solutions that have a stationary profile for both systems~\eqref{eq:NS-lim}-\eqref{eq:constraint} and~\eqref{eq:NS-ep}: $(v,u)(t,x) = (\tv,\tu)(\xi)$ with $\xi = x- st$ and $s$ is the constant speed at which the profile travels. 
We show below that these profiles,
denoted in the following $(\bar v, \bar u)$ and $(\bar v_\ep,\bar u_\ep)$ respectively,
%(which are completely explicit for the limit system) 
give us precious information about the transition from a congested state to a free state. 
Next, we are interested in the existence of regular solutions to~\eqref{eq:NS-ep} and~\eqref{eq:NS-lim}-\eqref{eq:constraint} for perturbations of the profiles $(\bar v_\ep, \bar u_\ep)$ and $(\bar v,\bar u)$ respectively.
In the approximate case, with $\ep > 0$ fixed, we present a global existence and stability result for small (quantified in terms of $\ep$) regular perturbations of $(\bar v_\ep, \bar u_\ep)$.
This result was initially proved in~\cite{DP}.
Concerning the limit system~\eqref{eq:NS-lim}-\eqref{eq:constraint}, we announce a first local well-posedness result for ``compatible'' initial perturbations (not necessarily small) of the limit profile $(\bar v, \bar u)$, whose complete proof will be given in a forthcoming paper.\\
These two results both rely on high order energy estimates satisfied by regular solutions of the two systems.
In both cases, the derivation of such estimates is facilitated by a change of velocity variable.
Introducing the \emph{effective velocity} $w := u - \mu \partial_x \ln v$, the mass equation rewrites as
\[
\partial_t v - \partial_x w - \mu \partial^2_x \ln v = 0,
\]
with an additional (nonlinear) diffusion term compared to~\eqref{eq:NS-lim-v} which let us expect regularization effects on the specific volume variable $v$ (see also~\cite{vasseur2016}).
Nonetheless, the geometries of the two settings strongly differ from one another.
In the approximate case ($\ep > 0$) the system~\eqref{eq:NS-ep} is set on $\R$. 
On the limit system \eqref{eq:NS-lim}-\eqref{eq:constraint}, we shall restrict ourselves to initial perturbations localized on $\R_+$, {\it i.e.} in the free domain of $(\bar v, \bar u)$. Looking for solutions that remain partially congested, the system~\eqref{eq:NS-lim}-\eqref{eq:constraint} is then studied only on the half-line $[\tilde{x}(t),+\infty[$ where $\tilde{x}(t)$ denotes the position of the interface between the free and the congested domain at time $t$.
Therefore additional difficulties in that case are expected due the free boundary $x=\tilde{x}(t)$ which is an unknown of the system.

\bigskip
The notes are organized as follows. 
First, in Section~\ref{sec:profiles}, we prove the existence and give qualitative properties of partially congested propagation fronts for both systems~\eqref{eq:NS-lim}-\eqref{eq:constraint} and~\eqref{eq:NS-ep}.
Then, we present in Section~\ref{sec:stab-ep} a result concerning the asymptotic stability of the approximate profiles $(\bar v_\ep,\bar u_\ep)$.
Finally, in Section~\ref{sec:LWP}, we announce a local well-posedness result on the limit system~\eqref{eq:NS-lim}.

%%%%%%%%%%%%%%%%%%%%%%%%%%%%%%%

\section{Partially congested propagation fronts and their soft congestion approximation}{\label{sec:profiles}}

In this section we construct traveling wave profiles for both limit and approximated problems.
For that purpose, we complement system~\eqref{eq:NS-lim}-\eqref{eq:constraint} with the conditions
\begin{equation}\label{eq:endstates}
(v,u)(t,x) \to (v_\pm,u_\pm) \quad \text{as}~ x \to \pm \infty,
\end{equation}
and we assume that 
\begin{equation}\label{eq:endstates-0}
v_- = 1 < v_+, \qquad u_- > u_+,
\end{equation}
in other words we assume that the left end state is congested while the right end state is free.
The condition  $u_+ < u_-$ is an entropy condition on the shock (we shall only consider positive speed shocks).

\subsection{Explicit free-congested front for the limit system}

\begin{lem}\label{lem:profile-ref}
Assume that $u_->u_+$, $v_+>1$, and let
\begin{equation}\label{df:speed}
s:=\frac{u_--u_+}{v_+-1}.
\end{equation}
Then there exists a unique (up to a shift) travelling wave solution of \eqref{eq:NS-lim}. This travelling wave propagates at speed $s$ and is of the form $(\bu, \bv)(x-st)$. Furthermore, 
\[
 \bv(x)= \begin{cases}
1&\text{ if }x\leq 0,\\
\displaystyle\frac{v_+}{1 + (v_+-1) \exp(-sv_+x/\mu)}&\text{ if }x> 0,
\end{cases}.
\]
\[
\bu=u_+ + s v_+ - s\bv = u_- + s v_- - s\bv.
\]
 In the zone $x<0$, the pressure is constant and equal to $p_-= s^2 (v_+ - 1)$. 
% Eventually, introducing the effective velocity $\bw= \bu - \mu \p_x \ln \bv$, we have
% \[
% \bw(x)=u_- \mathbf 1_{x<0} + u_+ \mathbf 1_{x>0}.
% \]

\label{lem:TW}
\end{lem}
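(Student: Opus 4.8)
The strategy is to pass to the moving frame $\xi = x - st$, reduce the PDE system to an ODE system, integrate once using the far-field conditions~\eqref{eq:endstates}--\eqref{eq:endstates-0}, and then solve the resulting scalar ODE for $\bv$ explicitly. In the travelling-wave ansatz $(v,u)(t,x)=(\bv,\bu)(\xi)$, equation~\eqref{eq:NS-lim-v} becomes $-s\bv' - \bu' = 0$, so $\bu' = -s\bv'$, i.e. $\bu + s\bv$ is constant. Matching at $\xi\to+\infty$ gives $\bu = u_+ + sv_+ - s\bv$, and matching at $\xi\to-\infty$ (using $v_-=1$) forces $u_+ + sv_+ = u_- + s v_-$, which is exactly the Rankine--Hugoniot relation~\eqref{df:speed} defining $s$; this is a consistency check, not an extra hypothesis. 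Substituting into the momentum equation and integrating once in $\xi$ gives a first-order relation among $\bv$, $\bp$, and $\tfrac{1}{\bv}\bv'$ (after rewriting $\p_x u = -s\bv'$), with the constant of integration fixed by the right end state where $\bv'\to 0$ and $\bp = 0$ (the free state).

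In the congested region I expect $\bv \equiv 1$, hence $\bv' = 0$ and $\bu$ constant $= u_-$; plugging into the integrated momentum equation then yields the constant pressure value, and a short computation should give $p_- = s^2(v_+-1)$. In the free region $\bv > 1$, the pressure vanishes by the constraint~\eqref{eq:constraint}, so the integrated momentum balance reduces to a first-order autonomous ODE of the form $\mu \dfrac{\bv'}{\bv} = -s\,(\bv - 1)(\text{something linear in }\bv)$ --- more precisely, after the integration the quadratic terms $s^2\bv$ and the end-state constants combine so that the right-hand side factors as a constant multiple of $(\bv - 1)(\bv - v_+)$ divided by $\bv$ (the roots being exactly the two end states, which is forced since both are equilibria). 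Separating variables, $\displaystyle \int \frac{\mu\, d\bv}{(\bv-1)(v_+ - \bv)} = s\,d\xi$, and a partial-fraction computation produces the logistic-type formula stated, with the integration constant fixed by the normalization $\bv(0^+) = 1$ so that the profile is continuous across the interface. One then checks $\bv$ is increasing, $\bv\to v_+$ as $\xi\to+\infty$, and that $\bv'(0^+)$ has the correct sign; matching $\bv$ and $\bv'$ (equivalently $\bu$ and the viscous stress) at $\xi = 0$ confirms the piecewise function is a genuine (weak) solution.

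Uniqueness up to a shift follows because every step above is forced: the ansatz plus the two far-field limits pin down $s$ and the affine relation $\bu = u_+ + sv_+ - s\bv$; in the free region the scalar ODE for $\bv$ has a unique solution through any given point by Cauchy--Lipschitz (the vector field is smooth on $\bv\in(1,v_+)$), so the only freedom is the choice of that point, i.e. a translation in $\xi$; and the requirement that the solution be partially congested --- equal to $1$ on a half-line and $>1$ beyond --- determines where the congested region must sit (to the left, by the entropy/speed sign condition $u_- > u_+$, $s>0$).

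**Main obstacle.** The delicate point is not the explicit integration but the gluing at $\xi = 0$: one must verify that the piecewise-defined pair solves~\eqref{eq:NS-lim}--\eqref{eq:constraint} in the appropriate weak sense, i.e. that no spurious mass or momentum is created at the interface. Concretely this means checking that $\bv$ is $C^1$ across $\xi=0$ (so that $\p_x u = -s\bv'$ is continuous and the viscous flux $\tfrac1v\p_x u$ has no jump) and that the limiting pressure from the free side matches $p_-$ from the congested side in the distributional momentum equation --- equivalently, that the one-sided derivative $\bv'(0^+)$ computed from the logistic formula is consistent with the value $p_-=s^2(v_+-1)$ obtained independently in the congested zone. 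I would also need to confirm that the constraint $(v-1)p = 0$, $p\geq 0$ holds everywhere, which on the free side ($p=0$) is automatic and on the congested side ($v=1$) requires only $p_- \geq 0$, true since $s^2 > 0$ and $v_+ > 1$. Once this compatibility at the interface is in place, the statement follows.
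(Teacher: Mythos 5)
Your overall strategy (moving frame, integrating the momentum equation once with the constant fixed at $+\infty$, Rankine--Hugoniot for $s$, a scalar autonomous ODE in the free zone, gluing at $\xi=0$) is the same as the paper's, but two concrete steps are wrong. First, the free-zone ODE does not factor through the end states. Substituting $\bu'=-s\bv'$ and $\bp=0$ into the integrated momentum equation gives $\mu s\,\bv'/\bv = s^2(v_+-\bv)$, i.e.\ $\bv'=\frac{s}{\mu}\bv(v_+-\bv)$: the equilibria are $0$ and $v_+$, \emph{not} $1$ and $v_+$. Your claimed factorization $\mu\bv'=s(\bv-1)(v_+-\bv)$ would make $\bv=1$ an equilibrium, so the solution with $\bv(0)=1$ would be identically $1$ (equivalently, your separated integral $\int \mu\,d\bv/((\bv-1)(v_+-\bv))$ diverges logarithmically at $\bv=1$), and the profile could never reach the congested value at a finite position. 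The whole point is that $1$ is a transversal crossing of the free-zone dynamics, with $\bv'(0^+)=\frac{s}{\mu}(v_+-1)>0$; this is what produces the stated logistic formula with rate $sv_+/\mu$ and what creates the free boundary at a finite location.

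Second, and relatedly, the matching at $\xi=0$ is not a $C^1$ matching. Since $\bv'(0^-)=0$ while $\bv'(0^+)=\frac{s}{\mu}(v_+-1)\neq 0$, the derivative $\bv'$ (hence $\bu'=-s\bv'$ and the viscous stress $\mu\bu'/\bv$) \emph{does} jump across the interface, contrary to what you assert in your ``main obstacle'' paragraph. The correct condition, which is the one the paper imposes, is continuity of $\bv$, $\bu$ and of the \emph{total} flux $\bp-\mu\bu'/\bv$: the jump in the viscous stress is exactly compensated by the jump of the pressure from $p_-$ to $0$, and this balance is precisely what yields $p_-=\mu s\,\bv'(0^+)=s^2(v_+-1)$. (Your alternative derivation of $p_-$ from the integrated momentum equation evaluated in the congested zone is fine and equivalent, but it is not ``independent of'' the interface condition --- it uses the constant of integration fixed at $+\infty$ across the whole line.) With the ODE corrected to $\bv'=\frac{s}{\mu}\bv(v_+-\bv)$ and the matching restated as continuity of the total flux rather than of $\bv'$, your argument aligns with the paper's proof.
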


The profile is represented in Figure~\ref{fig:profile}.
\begin{figure}[h]
\begin{center}
\includegraphics[scale=0.35]{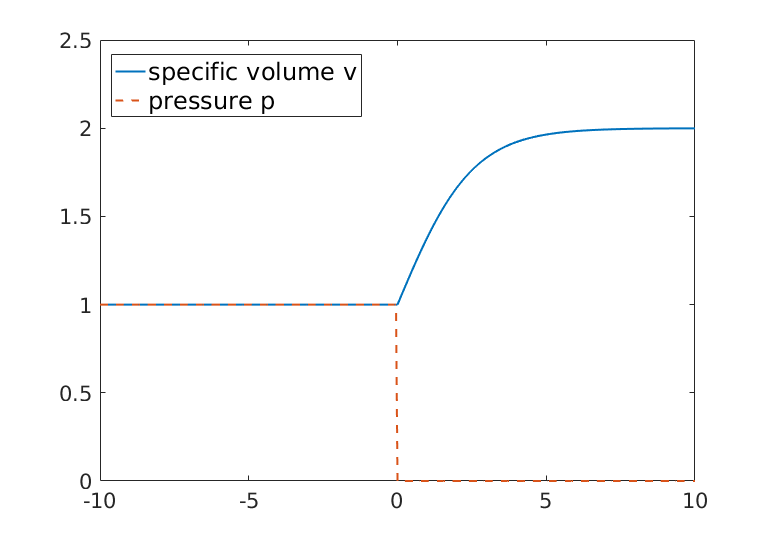}
\includegraphics[scale=0.35]{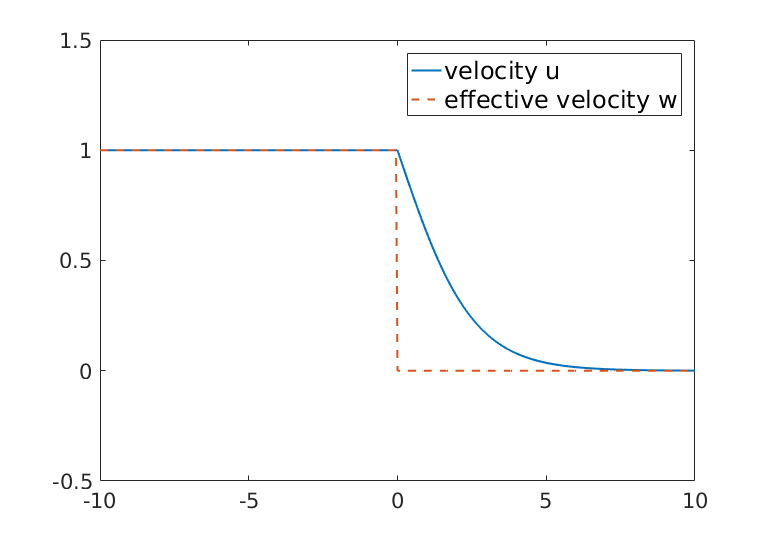}
\end{center}
\caption{Let $v_+ = 2,\ u_- = 1, \ u_+ = 0$. On the left: the profiles $\bv$ and $\bp$, on the right: the profiles $\bu$ and $\bw = \bu - \mu \p_x \ln \bv$. }
\label{fig:profile}
\end{figure}

\begin{proof}[Sketch of proof]
Let $s$ be the speed of propagation of the profile $(\bar v, \bar u)$. 
System~\eqref{eq:NS-lim} becomes
\begin{equation}\label{eq:EDO-prof}
\begin{cases}
-s \bar v' - \bar u' = 0, \\
-s \bar u' + \bar p' - \mu \left(\dfrac{\bar u'}{\bar v}\right)' = 0.
\end{cases}
\end{equation}
We look for a profile which is congested on $]-\infty, 0]$ and free on $]0,+\infty[$\footnote{It can be proved that the analysis can always be reduced to this case, see \cite{DP}.}.
In the free zone, it is easy to show that the dynamics reduces to a logistic equation for both variables:
\[
\begin{cases}
\bv' = \dfrac{s}{\mu} \bv (v_+ -\bv), \\
\bu' = -s \bv' = \dfrac{1}{\mu} (u_+-\bu)(sv_++u_+- \bu),
\end{cases}
\]
while in the congested zone we have $\bv=1$, $\bu =u_-$.
Next, the two dynamics are connected at the point $x^*=0$ by imposing the continuity of $\bv$, $\bu$ and the flux $\bp - \mu \frac{\bu'}{\bv}$.
We recover then the value of the shock speed~\eqref{df:speed} be integrating~\eqref{eq:EDO-prof} between $0$ and $+\infty$.
Eventually, the value of the pressure in the congested domain $x < 0$ is given by: $\bp(x) = - \mu \lim_{x \to 0^+} \bu'(x) = \mu s \lim_{x \to 0^+} \bv'(x) = s^2 (v_+-1)$.
\end{proof}

\subsection{Approximation through the soft congestion approach}
We are now interested in travelling wave profiles $(\bv_\ep,\bu_\ep)$ associated to the soft congestion problem. 
To fix the ideas and make the analysis more explicit, we specify the singular pressure law and set
\begin{equation}\label{df:pep}
p_\ep(v) = \dfrac{\ep}{(v-1)^\gamma} \quad \text{for}~ v > 1, ~\text{with}~ \gamma \geq 1.
\end{equation}
As the reader may check, the results presented in the rest of the notes can be generalized to other pressure laws that are strictly decreasing, convex on $]1, +\infty[$ and singular close to $1$. 
Let $s_\ep$ be the speed of propagation of the profile, we have the following system of ODEs
\[
\begin{cases}
s_\ep \bv'_\ep + \bu'_\ep = 0,  \\
-s_\ep \bu'_\ep + (p_\ep(\bv_\ep))' - \left(\dfrac{\bu'_\ep}{\bv_\ep}\right)' = 0.
\end{cases}
\]
supplemented with the far field conditions
\be\ba
\label{eq:endstates-ep}
\ve(t,x)\to v_\pm^\ep \quad \ue(t,x)\to u_\pm \quad \text{as }x\to \pm \infty,\\
1<v_-^\ep<v_+,\quad u_->u_+.
\ea
\ee
Note that the left limit condition on the specific volume has to be modified in view of the pressure law~\eqref{df:pep} so that $p_\ep(v_-^\ep)$ remains bounded as $\ep \to 0$.
We then set $ v_-^\ep := 1 + \ep^{1/\gamma}$, and we take $v^\ep_+=v_+ $ independent of $\ep$.

\begin{lem}\label{lem:profile-ep}
Assume that $u_- > u_+$, $v_+ > v_-^\ep$ and let 
\[
s_\ep := \sqrt{-\dfrac{p_\ep(v_+) -1}{v_+-1}}.
\]
Then there exists a unique (up to a shift) traveling wave solution $(\bv_\ep,\bu_\ep)$ solution of~\eqref{eq:NS-ep} with end states $(v_-^\ep,u_-)$ (resp. $(v_+,u_+)$) at $-\infty$ (resp. $+\infty$).\\
Moreover, fixing the shift by setting $\bv_\ep(0) = 1 + \ep^{1/(\gamma+1)}$ and taking $ v_-^\ep := 1 + \ep^{1/\gamma}$, we have 
% \[
% \|\bv_\ep - \bv\|_{L^\infty(-R,R)} \leq C_R \ep^{\frac{1}{\gamma+1}} \quad \text{for all}~ R > 0,
% \]
% and
, up to a subsequence,
\[
\bv_\ep \to \bv \quad \text{in}~\mathcal{C}(-R,R) ~\forall R > 0~ \text{and weakly-* in}~ W^{1,\infty}(\R).
\]
\end{lem}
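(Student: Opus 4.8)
The plan is to reproduce the reduction used in the proof of Lemma~\ref{lem:TW}. The mass equation gives $\bu_\ep=C_\ep-s_\ep\bv_\ep$ with $C_\ep$ fixed by the right end state, hence $\bu'_\ep=-s_\ep\bv'_\ep$; inserting this into the momentum equation and integrating once on $(-\infty,x)$ (using $\bv_\ep\to v_-^\ep$ and $\bv'_\ep\to0$ at $-\infty$) turns the system into the single scalar autonomous equation
\be
\mu\, s_\ep\,\frac{\bv'_\ep}{\bv_\ep}=h_\ep(\bv_\ep),\qquad
h_\ep(v):=-s_\ep^2\,(v-v_-^\ep)-p_\ep(v)+p_\ep(v_-^\ep).
\ee
Imposing $\bv_\ep\to v_+$ at $+\infty$ forces $h_\ep(v_+)=0$; since $h_\ep(v_-^\ep)=0$ automatically, this identity determines $s_\ep$ (consistently with the value in the statement, recalling that $p_\ep(v_-^\ep)=1$), while the leftover algebraic relation fixes the left end state of $\bu_\ep$. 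The velocity profile is recovered from $\bv_\ep$ at the very end.

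For existence and uniqueness I would use that $p_\ep$ is strictly convex, so $h_\ep$ is strictly concave; vanishing at the two points $v_-^\ep<v_+$, it is then strictly positive on $(v_-^\ep,v_+)$ and negative outside. Hence $v_-^\ep$ and $v_+$ are the only equilibria of $\bv'_\ep=\bv_\ep\,h_\ep(\bv_\ep)/(\mu s_\ep)$ in $[v_-^\ep,v_+]$ and the right-hand side is positive between them, so every trajectory issued from that interval is globally defined, strictly increasing, trapped in $(v_-^\ep,v_+)$, and tends to $v_\pm^\ep$ as $x\to\pm\infty$: this is the sought heteroclinic connection, and uniqueness up to translation is the standard fact for autonomous scalar equations with locally Lipschitz nonlinearity. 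Since $1+\ep^{1/\gamma}<1+\ep^{1/(\gamma+1)}<v_+$ for $\ep$ small, the translate normalised by $\bv_\ep(0)=1+\ep^{1/(\gamma+1)}$ is well defined.

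Next I would derive uniform bounds as $\ep\to0$: from $v_-^\ep\le\bv_\ep\le v_+$, from $s_\ep\to s>0$, and from $0\le p_\ep(v)\le1$ and $s_\ep^2(v_+-v_-^\ep)\le1$ on $[v_-^\ep,v_+]$, one obtains $\|\bv_\ep\|_{L^\infty}+\|\bv'_\ep\|_{L^\infty}\le C$ independently of $\ep$. Thus $(\bv_\ep)$ is bounded in $W^{1,\infty}(\R)$, and along a subsequence $\bv_\ep\to v_\star$ in $\mathcal{C}_{\mathrm{loc}}(\R)$ and weakly-* in $W^{1,\infty}(\R)$, with $v_\star$ Lipschitz, non-decreasing, $1\le v_\star\le v_+$ and $v_\star(0)=1$; monotonicity then forces $v_\star\equiv1$ on $(-\infty,0]$, which already matches $\bv$ there.

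The heart of the proof, and where I expect the main obstacle, is to identify $v_\star$ on $(0,\infty)$ with the explicit profile $\bv$ of Lemma~\ref{lem:TW}. On any compact subset of $\{v_\star>1\}$ the functions $\bv_\ep$ are eventually bounded away from $1$, so $p_\ep(\bv_\ep)\to0$ there and one may pass to the limit in $\mu s_\ep\,\bv'_\ep/\bv_\ep=-s_\ep^2(\bv_\ep-v_-^\ep)+1-p_\ep(\bv_\ep)$ to obtain the logistic equation $v_\star'=\tfrac s\mu\,v_\star(v_+-v_\star)$ of Lemma~\ref{lem:TW}. It remains --- and this is the delicate point --- to prevent the transition layer from degenerating, i.e.\ from staying flat at $1$ or escaping to $\pm\infty$; this is precisely what the intermediate normalisation $\bv_\ep(0)=1+\ep^{1/(\gamma+1)}$ handles, since $p_\ep\bigl(1+\ep^{1/(\gamma+1)}\bigr)=\ep^{1/(\gamma+1)}\to0$ yields $h_\ep\ge c_\delta>0$ uniformly on $[\,1+\ep^{1/(\gamma+1)},\,v_+-\delta\,]$ for each fixed $\delta>0$, forcing $\bv_\ep$ to climb from $1+\ep^{1/(\gamma+1)}$ to $v_+-\delta$ within a bounded $x$-interval. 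Letting $\ep\to0$ gives $v_\star(x)>1$ for $x>0$ and $v_\star(x)\to v_+$ as $x\to+\infty$, so $v_\star$ is the logistic solution on $(0,\infty)$ with $v_\star(0^+)=1$, i.e.\ $v_\star=\bv$. As the limit is independent of the subsequence, the whole family converges, which in particular yields the subsequential statement claimed. Throughout, the crux is the control of the singular term $p_\ep(\bv_\ep)$ near the free boundary $x=0$, where $\bv_\ep\to1$ while $p_\ep$ blows up; the exponent $1/(\gamma+1)$, lying strictly between $0$ and the congested scale $1/\gamma$, is exactly what makes this control possible.
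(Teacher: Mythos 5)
Your proposal is correct and follows essentially the same route as the paper's sketch: reduce to the scalar autonomous ODE \eqref{EDO-vep}, get existence and uniqueness up to translation of the increasing heteroclinic from the concavity of the right-hand side (convexity of $p_\ep$), and extract compactness from the uniform Lipschitz bound coming from $p_\ep(\bv_\ep)\leq p_\ep(v_-^\ep)=1$. You additionally carry out the identification of the limit with $\bv$ (via the normalisation $p_\ep(\bv_\ep(0))=\ep^{1/(\gamma+1)}\to 0$ and the uniform lower bound on $\bv_\ep'$ in the free zone), a step the paper leaves implicit; this is done correctly.
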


\bigskip
\begin{proof}[Sketch of proof]
It can be shown that 
\be\label{EDO-vep}
\bv'_\ep = \dfrac{\bv_\ep}{\mu s_\ep} \left(s_\ep^2(v_+ -\bv_\ep) + p_\ep(v_+) - p_\ep(\bv_\ep)\right),
\ee
so that existence and uniqueness (up to a shift) of a monotone (increasing) profile $\bv_\ep$ follows easily from ODEs arguments and the convexity of the pressure law $p_\ep$. 
Observing that $p_\ep(\bv_\ep) \leq p_\ep(v_-^\ep) = 1$, we control $\bv'_\ep$ uniformly with respect to $\ep$ and infer the uniform convergence of $\bv_\ep$ towards $\bv$.
\end{proof}

\bigskip

We can actually be more precise on the behavior of $\bv_\ep$.
We distinguish between three zones:
\begin{itemize}
\item {\it Congested zone:} this corresponds to a zone $]-\infty, x_{min}]$ in which $\bar v_\ep=1 + O(\ep^{1/\gamma})$, so that the pressure remains bounded. 
In this zone, the analysis of the linearized version of \eqref{EDO-vep} around $v^\ep_-$ shows that the profile $\bv_\ep$ converges exponentially towards $v_-^\ep$.\\
Computations show that $x_{min} = O(\ep^{1/(\gamma+1)})$.

\item {\it Free zone:} this corresponds to the region where $p_\ep (\bar v_\ep)\ll 1$, $p_\ep' (\bar v_\ep)=O (1)$.
With the choice of the shift above, this corresponds to the interval $[0, +\infty[$. In this zone, one proves that $\|\bv_\ep - \bv\|_{L^\infty} \lesssim \ep^{\frac{1}{\gamma+1}}$.

\item {\it Transition zone $[-x_{min},0]$:} in this small region, we have the following error estimate.

\begin{lem}
Assume the conditions of the previous lemma. Let $\tilde{v}$ be the solution of the ODE
\[
\tilde{v}' = \dfrac{1}{\mu s}\left(1-\dfrac{1}{\tilde{v}^{\gamma}} \right), \qquad \tilde{v}(0) = 2,
\]
and let $x^* < 0$ be a suitable parameter such that $x^* = O(\ep^{1/(\gamma+1)})$.
Then 
\[
\Big|
\bv_\ep(x) - \bv(x) - \ep^{\frac{1}{\gamma}} \tilde v \left(\dfrac{x - x^*}{\ep^{1/\gamma}}\right)
\Big|
\leq C \ep^{\frac{1}{\gamma+1}} |x| 
\qquad \forall \ x \in [-x_{min},0],
\]
with $x_{min} = O(\ep^{1/(\gamma+1)})$.
\end{lem}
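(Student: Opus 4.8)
The plan is to treat the transition zone $[-x_{min},0]$ as a boundary-layer problem and to construct $\ep^{1/\gamma}\tilde v((x-x^*)/\ep^{1/\gamma})$ as the leading-order inner correction, then to estimate the difference via a Gr\"onwall-type argument on the ODE \eqref{EDO-vep}. First I would introduce the inner variable $y = (x-x^*)/\ep^{1/\gamma}$ and write $\bv_\ep(x) = 1 + \ep^{1/\gamma}\, V_\ep(y)$. Plugging this ansatz into \eqref{EDO-vep} and using $p_\ep(v) = \ep (v-1)^{-\gamma}$, the singular terms balance exactly: $p_\ep(\bv_\ep) = \ep \cdot \ep^{-1} V_\ep^{-\gamma} = V_\ep^{-\gamma}$, while $p_\ep(v_+) = \ep (v_+-1)^{-\gamma} = O(\ep) \to 0$ and $s_\ep^2 (v_+ - \bv_\ep) \to s^2(v_+-1)$, and $s_\ep \to s$. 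Since $s^2(v_+-1) = p_-$, dividing through by the appropriate power of $\ep$ shows that at leading order $V_\ep$ satisfies, up to $O(\ep^{1/(\gamma+1)})$ errors, the autonomous ODE $V' = \frac{1}{\mu s}\bigl(\frac{p_-}{\ep^{?}}\cdots\bigr)$; the correct normalization — which dictates the precise definition of $x^*$ and fixes the boundary value $\tilde v(0)=2$ to match the free-zone profile $\bv(0)=2$ — is exactly the one making the limiting equation $\tilde v' = \frac{1}{\mu s}(1 - \tilde v^{-\gamma})$.

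Next I would set $E_\ep(x) := \bv_\ep(x) - \bv(x) - \ep^{1/\gamma}\tilde v\bigl((x-x^*)/\ep^{1/\gamma}\bigr)$ and differentiate, using that $\bv_\ep$ solves \eqref{EDO-vep} exactly, that $\bv$ solves the free-zone logistic ODE (exactly on $[0,\infty)$ but only approximately on $[-x_{min},0]$, with a controlled defect), and that $\tilde v(\cdot)$ solves its ODE exactly. The terms producing $p_-$ cancel between $\bv$ and the singular part of $\bv_\ep$ by the matching above; the remaining right-hand side splits into (a) consistency errors from replacing $s_\ep, v_-^\ep, p_\ep(v_+)$ by their limits, each $O(\ep^{1/(\gamma+1)})$ on the zone by the scale estimates $x_{min}, x^* = O(\ep^{1/(\gamma+1)})$ already granted, and (b) a Lipschitz-in-$E_\ep$ term coming from the difference of the nonlinearities, where one must be careful that the nonlinearity $V\mapsto V^{-\gamma}$ is Lipschitz only because $V_\ep$ and $\tilde v$ stay bounded away from $0$ (they are $\geq 1$, resp. $\geq 1$, on the relevant range). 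This gives $|E_\ep'(x)| \leq C|E_\ep(x)| + C\ep^{1/(\gamma+1)}$ on $[-x_{min},0]$. Integrating from $0$, where $E_\ep(0) = \bv_\ep(0) - \bv(0) - \ep^{1/\gamma}\tilde v((0-x^*)/\ep^{1/\gamma})$ is $o(\ep^{1/(\gamma+1)})$ (again using $\bv_\ep(0) = 1+\ep^{1/(\gamma+1)}$, $\bv(0)=2$, and the boundedness of $\tilde v$ times $\ep^{1/\gamma}$), and using $|x|\le x_{min} = O(\ep^{1/(\gamma+1)})$ so that $e^{C|x|} = 1 + O(\ep^{1/(\gamma+1)})$, yields $|E_\ep(x)| \leq C\ep^{1/(\gamma+1)}|x|$ as claimed.

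The main obstacle is the bookkeeping of scales: one must verify that every term that is \emph{not} $O(\ep^{1/(\gamma+1)}|x|)$ genuinely cancels, and in particular that the precise choice of $x^*$ — which is forced by requiring the inner and outer expansions to match to the stated order — makes the leading singular contributions agree. Concretely, the delicate point is tracking how $p_\ep(v_+) = O(\ep)$, the $\ep^{1/(\gamma+1)}$-sized gap between the shift point $\bv_\ep(0)=1+\ep^{1/(\gamma+1)}$ and the congested value $1+\ep^{1/\gamma}$, and the $O(\ep^{1/(\gamma+1)})$ discrepancy $s_\ep - s$ all feed into the error, and confirming none of them degrades the bound below $\ep^{1/(\gamma+1)}$. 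Once the consistency estimates are pinned down, the Gr\"onwall step and the final integration are routine.
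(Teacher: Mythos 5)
Your skeleton---passing to the inner variable $y=(x-x^*)/\ep^{1/\gamma}$, identifying the limit equation $\tilde v'=\frac{1}{\mu s}(1-\tilde v^{-\gamma})$ from the exact balance $p_\ep(1+\ep^{1/\gamma}V)=V^{-\gamma}$ combined with $s_\ep^2(v_+-1)+p_\ep(v_+)=1$, and bounding the consistency defect by $C\ep^{1/(\gamma+1)}$ pointwise---is the right one (the notes give no proof of this lemma and defer to \cite{DP}, so I judge the argument on its own terms). The gap is in the closing Gr\"onwall step. Linearizing the right-hand side of \eqref{EDO-vep} around the approximate profile produces a coefficient of $E_\ep$ which is essentially $-\bv_\ep\, p_\ep'(\xi)/(\mu s_\ep)=\gamma\bv_\ep\,\ep(\xi-1)^{-\gamma-1}/(\mu s_\ep)$ with $\xi-1$ as small as $O(\ep^{1/\gamma})$ on part of the transition zone; this is of size $\ep^{-1/\gamma}$, not $O(1)$. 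Your observation that $V_\ep$ and $\tilde v$ stay bounded away from $0$ only gives an $O(1)$ Lipschitz constant in the inner variable $y$, over an interval of length $x_{min}\,\ep^{-1/\gamma}=O(\ep^{-1/(\gamma(\gamma+1))})\to\infty$. In either set of variables the Gr\"onwall exponent (Lipschitz constant times interval length) diverges, so $e^{C|x|}=1+O(\ep^{1/(\gamma+1)})$ is unavailable and the inequality $|E_\ep'|\le C|E_\ep|+C\ep^{1/(\gamma+1)}$ with $C$ independent of $\ep$ is simply false as stated. The missing idea is the \emph{sign} of this coefficient: it is positive exactly where it is large (the congested end state is a repelling equilibrium of \eqref{EDO-vep} in forward $x$), so writing $E_\ep'=a(x)E_\ep+f$ with $a>0$ and integrating from $x=0$ leftward, every weight $e^{\int_t^x a}$ with $x\le t\le 0$ is $\le 1$, and one gets directly $|E_\ep(x)|\le|E_\ep(0)|+\int_x^0|f|\le|E_\ep(0)|+C\ep^{1/(\gamma+1)}|x|$ without any exponential amplification.

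Two further points concern the matching at $x=0$, which is where $x^*$ is actually determined. Since the claimed bound $C\ep^{1/(\gamma+1)}|x|$ vanishes at $x=0$, you need $E_\ep(0)=0$ exactly, not merely $o(\ep^{1/(\gamma+1)})$; this identity is the definition of $x^*$, namely $\ep^{1/\gamma}\tilde v(-x^*/\ep^{1/\gamma})=\bv_\ep(0)-\bv(0)=\ep^{1/(\gamma+1)}$, which has a unique root $x^*<0$ with $x^*\sim-\mu s\,\ep^{1/(\gamma+1)}$ because $\tilde v(y)\sim y/(\mu s)$ as $y\to+\infty$. Your evaluation of $E_\ep(0)$ rests on two misreadings: $\bv(0)=1$, not $2$ (the value $v_+=2$ is only the choice made for Figure~\ref{fig:profile}, and $\tilde v(0)=2$ is an arbitrary normalization of the autonomous inner ODE, absorbed into the shift $x^*$); and $\ep^{1/\gamma}\tilde v(-x^*/\ep^{1/\gamma})$ is of size $\ep^{1/(\gamma+1)}$, not $O(\ep^{1/\gamma})$, since its argument tends to $+\infty$ and $\tilde v$ is unbounded. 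Both are fixable, but as written neither the initialization at $x=0$ nor the propagation across the transition zone goes through.
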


\end{itemize}

\section{Perturbations of the approximate profiles $(\bar v_\ep,\bar u_\ep)$}{\label{sec:stab-ep}}

As said in the introduction, it will be convenient to rewrite Eq~\eqref{eq:NS-ep} in the variables $(v,w)$, that is
\[
\begin{cases}
\partial_t v - \partial_x w - \mu \partial^2_x \ln v = 0 \\
\partial_t w + \partial_x p_\ep(v) = 0
\end{cases}
\quad \text{for}~ t > 0,\  x \in \R,
\]
with 
\[
(v,w) \to (v^\ep_\pm, u_\pm) \quad \text{as}~ x \to \pm \infty,
\]
and initial data
\[
(v,w)_{t=0} = (v^0, u^0 - \mu \partial_x \ln v^0).
\]
We recall that $v_+^\ep:=v_+$ and $u_\pm$ are independent of $\ep$, and that $v^\ep_-=1 + \ep^{1/\gamma}$.
Now, assuming that $(v,w)_{|t=0} \in (\bar v_\ep, \bar w_\ep) + L^1_0 \cap L^\infty(\R)$ where $L^1_0(\R)$ is the set of $L^1$ functions of zero mass, we look at the system
\begin{subnumcases}{\label{eq:NS-ep-int}}
\partial_t V - \partial_x W - \mu \partial_x \ln\left(1+ \dfrac{\partial_x V}{\bar v_\ep(x-s_\ep t)} \right) = 0 \\
\partial_t W + p_\ep(\bar v_\ep(x-s_\ep t) + \partial_x V) - p_\ep(\bar v_\ep(x-s_\ep t)) = 0
\end{subnumcases}
satisfied by the integrated variables
\[
V(t,x) := \int_{-\infty}^x (v (t,z)- \bar v_\ep(z-s_\ep t)) \ dz, \qquad
W(t,x) := \int_{-\infty}^x (w (t,z)- \bar w_\ep(z-s_\ep t))  \ dz.
\]
Our strategy is the following: first we prove by a fixed point argument the existence and uniqueness a global regular solution $(V,W)$ under smallness assumptions on $(V,W)_{|t=0}$; then we come back to the original variables and deduce the existence and uniqueness of a couple $(v,u)$ solution to~\eqref{eq:NS-ep}.
The regularity of $(v-\bar v_\ep (\cdot-s_\ep t),u-\bar u_\ep(\cdot-s_\ep t))$ will eventually ensure the asymptotic stability of $(\bar v_\ep,\bar u_\ep)$, {\it i.e.} the convergence of $(v-\bar v_\ep (\cdot-s_\ep t),u-\bar u_\ep(\cdot-s_\ep t))$ to $0$ as $t\to +\infty$.

In the whole subsection, we fix
\[
p_\ep(v) := \dfrac{\ep}{(v-1)^\gamma} \quad \text{for}~ v >1, \ \ep > 0, \ \gamma \geq 1. 
\]

\medskip
Here is the first result guaranteeing the existence of a global strong solution $(V,W)$.
\begin{prop}\label{prop:ex-VW}
	Assume that $(V_0,W_0) \in (H^2(\R))^2$ with
	\begin{equation}\label{eq:init-1}
	\sum_{k=0}^2 \ep^{\frac{2k}{\gamma}}\int_{\R}{\left[ \dfrac{|\partial^k_x W_0|^2}{-p'_\ep(\bar v_\ep)} + |\partial^k_x V_0|^2 \right] dx} \leq \delta_0^2 \ep^{\frac{5}{\gamma}}
	\end{equation}
	for some $\delta_0$ small enough, depending only on $v_+$, $\gamma$ and $\mu$.
	Then there exists a unique global solution $(V,W)$ to~\eqref{eq:NS-ep-int} satisfying
	\begin{align*}
	& V \in \mathcal{C}([0;+\infty);H^2(\R)) \cap L^2(\R_+; H^3(\R)), \\
	& W \in \mathcal{C}([0;+\infty);H^2(\R)).
	\end{align*}
%	Moreover there exists $C > 0$ depending only on $v_+,\mu, \gamma,\delta_0,$ such that
%	\begin{equation}\label{estNL}
%	\sum_{k=0}^2 \ep^{\frac{2k}{\gamma}} \left[ \sup_{t \geq 0} \int_{\R}{\left( \dfrac{|\partial^k_x W|^2}{-p'_\ep(\ve)} + |\partial^k_x V|^2 \right) dx}
%	+  \int_{\R_+}\int_{\R}{ \left( \partial_x \ve |\partial^k_x W|^2 + |\partial^{k+1}_x V|^2 \right) dx \ dt} \right]  
%	\leq C \ep^{\frac{5}{\gamma}}.
%	\end{equation}
\end{prop}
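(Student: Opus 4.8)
The plan is to combine a short-time existence result with a global \emph{a priori} energy estimate and a continuation argument. I would first pass to the traveling-frame variable $\xi = x - s_\ep t$, in which $\bv_\ep$ becomes stationary at the price of transport terms $-s_\ep\partial_\xi$ in both equations, and introduce the energy and dissipation functionals
\[
\mathcal E(t) := \sum_{k=0}^2\ep^{\frac{2k}{\gamma}}\int_\R\Big[\frac{|\partial_x^k W|^2}{-p_\ep'(\bv_\ep)} + |\partial_x^k V|^2\Big]\,dx,\qquad
\mathcal D(t) := \mu\sum_{k=0}^2\ep^{\frac{2k}{\gamma}}\int_\R\frac{|\partial_x^{k+1}V|^2}{\bv_\ep+\partial_x V}\,dx,
\]
so that the hypothesis~\eqref{eq:init-1} reads exactly $\mathcal E(0)\le\delta_0^2\ep^{5/\gamma}$, and the claimed regularity amounts to $\sup_{t\ge 0}\mathcal E(t)<\infty$ together with $\int_0^{\infty}\mathcal D(t)\,dt<\infty$.

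For local existence, the key structural remark is that the second equation of~\eqref{eq:NS-ep-int} carries no $x$-derivative of $W$: given $V$ it determines $W$ by time integration, so substituting back into the first equation reduces the system to a single quasilinear, time-nonlocal parabolic equation for $V$ of the schematic form $\partial_t V = \mu\,\partial_{xx}V/(\bv_\ep+\partial_x V) + (\text{lower order})$. I would solve it on a short interval by a Banach fixed point argument in a ball of $\mathcal C([0,T];H^2(\R))\cap L^2(0,T;H^3(\R))$, using linear parabolic estimates for the linearized operator. One must check along the iteration that $v:=\bv_\ep+\partial_x V>1$ is preserved; this follows from $\bv_\ep\ge 1+\ep^{1/\gamma}$ together with the Gagliardo--Nirenberg bound $\|\partial_x V\|_{L^\infty}^2\lesssim\|\partial_x V\|_{L^2}\|\partial_x^2 V\|_{L^2}$, which combined with the $\ep$-weighted smallness yields $\|\partial_x V\|_{L^\infty}\ll\ep^{1/\gamma}$, so that the pressure nonlinearities remain smooth functions of $V$.

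The core of the proof is the global \emph{a priori} estimate. For each $k=0,1,2$ I would differentiate both equations $k$ times in $x$, test the first against $\partial_x^k V$ and the second against $\partial_x^k W/(-p_\ep'(\bv_\ep))$, and sum over $k$ with the weights $\ep^{2k/\gamma}$. The weight $1/(-p_\ep'(\bv_\ep))$ is chosen precisely so that the coupling terms $\int\partial_x^{k+1}W\,\partial_x^k V$ produced by the two equations cancel, up to commutators of $\partial_x^k$ with the $x$-dependent coefficients; integration by parts in the diffusion term produces the good dissipation $\mathcal D(t)$; and the remainder splits into genuinely nonlinear (cubic and higher) terms, each bounded by $C\sqrt{\mathcal E}\,(\mathcal E+\mathcal D)$ via Sobolev embedding, plus ``stiff'' terms supported in the transition layer $[-x_{min},0]$ and the congested boundary layer, carrying the derivatives $\bv_\ep',\bv_\ep'',\bv_\ep'''$ and $p_\ep'',p_\ep'''$ evaluated at $\bv_\ep$, together with the contribution of the moving weight $\partial_t\big(1/(-p_\ep'(\bv_\ep))\big)$. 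The target is a differential inequality of the form $\tfrac{d}{dt}\mathcal E + c\,\mathcal D \le C\sqrt{\mathcal E}\,(\mathcal E+\mathcal D)$; then, as long as $\mathcal E\le\delta_0^2\ep^{5/\gamma}$ with $\delta_0$ small, the right-hand side is absorbed and $\tfrac{d}{dt}\mathcal E + \tfrac c2\mathcal D \le 0$, so a standard bootstrap propagates $\mathcal E(t)\le\mathcal E(0)\le\delta_0^2\ep^{5/\gamma}$ for all $t\ge 0$ and yields $\int_0^\infty\mathcal D<\infty$.

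The main obstacle is controlling the stiff layer terms and the moving-weight term. On the transition region and the congested boundary layer the profile $\bv_\ep$ varies on the scales $O(\ep^{1/(\gamma+1)})$ and $O(\ep^{1/\gamma})$, so its derivatives (and the ratios $p_\ep''/p_\ep'$, $p_\ep'''/p_\ep'$) are large negative powers of $\ep$; the quantitative profile bounds from Section~\ref{sec:profiles} --- the pointwise control of $\bv_\ep'$ through~\eqref{EDO-vep}, the exponential convergence to $v_-^\ep$ in the congested zone, the estimate $x_{min}=O(\ep^{1/(\gamma+1)})$, and the transition-zone expansion --- must be combined with localized Poincar\'e and Gagliardo--Nirenberg inequalities on these thin layers. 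The exponents $2k/\gamma$ in the weights and the threshold $\ep^{5/\gamma}$ in~\eqref{eq:init-1} are calibrated exactly so that, after inserting these bounds, every commutator, stiff term, and moving-weight term is absorbed by $\tfrac\mu2\mathcal D$ plus a $C\sqrt{\mathcal E}\,\mathcal D$ contribution; making this bookkeeping close is where the real work lies. Once the uniform bound on $\mathcal E$ is established, global existence follows by continuation --- the bound keeps $v$ uniformly away from $1$ and $\|(V,W)\|_{H^2}$ finite on every interval, so the local solution cannot leave the ball in finite time --- and uniqueness follows from the contraction estimate of the fixed point (or from a direct energy estimate on the difference of two solutions).
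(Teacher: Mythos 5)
Your overall architecture (local existence for a quasilinear parabolic reduction, then a weighted \emph{a priori} estimate and continuation) differs from the paper, which instead runs a single global-in-time fixed point for the map $(V_1,W_1)\mapsto (V_2,W_2)$ solving the linearized system $\partial_t(V_2,W_2)+\cLep(V_2,W_2)=(F_\ep(\p_xV_1),G_\ep(\p_xV_1))$ on a small ball of the weighted space $\cX$; both routes are legitimate, and your identification of the weights $\ep^{2k/\gamma}$, of the threshold $\ep^{5/\gamma}$, and of the cross-absorption between the level-$k$ commutator term $\ep^{-2/\gamma}\int|\p_x^kV|^2$ and the level-$(k-1)$ dissipation is exactly the paper's mechanism (the paper additionally inserts geometric constants $c^k$ in the norm to make this absorption quantitative).

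There is, however, one genuine gap: your dissipation functional $\mathcal D$ contains only the $V$-dissipation $\sum_k\ep^{2k/\gamma}\int|\p_x^{k+1}V|^2$, and you list the ``moving-weight'' contribution $\partial_t\bigl(1/(-p_\ep'(\bar v_\ep(\cdot-s_\ep t)))\bigr)$ among the remainders to be absorbed. In fact that contribution equals $s_\ep\,\frac{p''_\ep(\bar v_\ep)}{(p'_\ep(\bar v_\ep))^2}\,\p_x\bar v_\ep\,|\p_x^kW|^2$ with a \emph{favorable} sign (since $s_\ep>0$, $p''_\ep>0$, $\p_x\bar v_\ep>0$ and $p''_\ep/(p'_\ep)^2\geq(\gamma+1)/\gamma$), and it must be kept on the left-hand side as a degenerate dissipation on $W$, $\int\p_x\bar v_\ep\,|\p_x^kW|^2$, localized where the profile varies. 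This term is not optional: the commutator $[\cLep,\p_x^k]$ produces, besides the $\ep^{-2/\gamma}\int|\p_x^kV|^2$ piece you account for, a cross term of the type $\int\frac{p''_\ep\,\p_x\bar v_\ep}{p'_\ep}\,\p_x^kV\,\p_x^kW$ whose Cauchy--Schwarz estimate requires $\eta\int\frac{p''_\ep}{(p'_\ep)^2}\p_x\bar v_\ep|\p_x^kW|^2$ on the right. This is quadratic in the solution with no smallness factor, so it cannot be bounded by $C\sqrt{\mathcal E}(\mathcal E+\mathcal D)$, and there is nothing in your $\mathcal D$ to absorb it; falling back on Gronwall would give an exponentially growing energy bound and destroy global existence (the paper makes precisely this point). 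The fix is simply to define $D_k$ as the paper does, namely $\int\p_x\bar v_\ep|\p_x^kW|^2+\int|\p_x^{k+1}V|^2$, after which your differential inequality and bootstrap close as described.
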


\medskip
\begin{proof}[Idea of the proof]
As announced previously, this result is achieved thanks to a fixed point argument which relies on energy estimates satisfied by $(V,W)$ and its derivatives $(\partial^k_x V, \partial^k_x W)$, $k=1,2$.
Since we are working close to the reference profile $(\bar v_\ep(\cdot-s_\ep t), \bar{w}_\ep(\cdot-s_\ep t))$, it is natural to rewrite the system~\eqref{eq:NS-ep-int} as follows
\begin{equation}\label{eq:linearized-VW}
\partial_t \begin{pmatrix} V \\ W \end{pmatrix}  + \cLep \begin{pmatrix} V \\ W \end{pmatrix} = \begin{pmatrix} F_\ep \\ G_\ep \end{pmatrix},
\end{equation}
with a linear left-hand side
\[
\cLep\begin{pmatrix} V \\ W \end{pmatrix} := 
\begin{pmatrix} 
- \partial_x W - \mu \partial_x \left(\dfrac{\partial_x V}{\bar v_\ep(\cdot-s_\ep t)} \right) \\
p'_\ep(\bar v_\ep(\cdot-s_\ep t)) \partial_x V
\end{pmatrix}
\]
that yields the main order part of the energy and dissipation terms; and a right-hand side
\begin{align*}
	F_\ep & = F_\ep(\partial_x V) := \mu \p_x \left[ \ln \left(1 + \dfrac{\partial_x V}{\bar v_\ep(\cdot-s_\ep t)}\right) - \dfrac{\partial_x V}{\bar v_\ep(\cdot-s_\ep t)} \right],\\
	G_\ep & = G_\ep(\partial_x V): = - \left[p_\ep (\bar v_\ep (\cdot-s_\ep t) + \partial_x V) - p_\ep (\bar v_\ep(\cdot-s_\ep t)) - p_\ep'(\bar v_\ep(\cdot-s_\ep t)) \partial_x V\right],
\end{align*}
which is quadratic in $\p_x V$ and will be treated as a perturbation.
Hence, taking the scalar product of \eqref{eq:linearized-VW} with $(V, \frac{W}{-p'_\ep(\bar v_\ep(\cdot-s_\ep t))})$, we get the energy estimate
\begin{align}\label{eq:k=0}
 \int_\R{\left[-\dfrac{1}{p'_\ep(\bar v_\ep)}{|W|^2} + {|V|^2} \right]}
	+ s_\ep\int_{\R_+}\int_\R{\dfrac{p''_\ep(\bar v_\ep)}{(p'_\ep(\bar v_\ep))^2} \partial_x \bar v_\ep |W|^2}
	+ 2\mu \int_{\R_+}\int_\R{\dfrac{(\partial_x V)^2}{\bar v_\ep}} & \nonumber\\
 \leq \int_\R{\left[\left(\dfrac{-1}{p'_\ep(\bar v_\ep)}{|W|^2}\right)_{|t=0} + {|V_{|t=0}|^2} \right]} 
	+ 2\left|\int_{\R_+}\int_\R \left[G_\ep\frac{W}{-p_\ep'(\bar v_\ep)} + F_\ep V\right]\right|,
\end{align}
where we have abusively written $\bar v_\ep$ as a short-hand for $\bar v_\ep(\cdot-s_\ep t)$ and where the integrals involving $F_\ep$ and $G_\ep$ are controlled by assuming that the distance between $(v,w)$ and the profile $(\bar v_\ep, \bar w_\ep)$ remains small enough.
Regarding the diffusion term on $W$, note that 
\[
\frac{p_\ep''(\bar v_\ep)}{p_\ep'(\bar v_\ep)^2}= \frac{(\gamma+1) (\ve-1)^\gamma}{\gamma\ep }= \frac{\gamma+1}{\gamma p_\ep(\bar v_\ep)}\geq \frac{\gamma+1}{\gamma}.
\]
To derive higher order estimates, we differentiate the system with respect to $x$ and perform the same calculations. 
Nevertheless, we need to take into account additional terms coming from the commutator of $\cLep$ and $\partial_x^k$, $k=1,2$. 
For $k=1$, we have
\begin{align}\label{eq:k=1}
& \int_\R{\left[-\dfrac{1}{p'_\ep(\bar v_\ep)}{|\partial_x W|^2} + {|\partial_x V|^2} \right]}
+ s_\ep\int_{\R_+}\int_\R{\dfrac{p''_\ep(\bar v_\ep)}{(p'_\ep(\bar v_\ep))^2} \partial_x \bar v_\ep |\partial_x W|^2}
+ 2\mu \int_{\R_+}\int_\R{\dfrac{(\partial^2_x V)^2}{\bar v_\ep}}  \nonumber\\
& \leq \int_\R{\left[\left(\dfrac{-1}{p'_\ep(\bar v_\ep)}{|\partial_x W|^2}\right)_{|t=0} + {|\partial_x V_{|t=0}|^2} \right]} 
+ 2\left|\int_{\R_+}\int_\R \left[\partial_x G_\ep\frac{\partial_x W}{-p_\ep'(\bar v_\ep)} + \partial_x F_\ep \partial_x V\right]\right| \\
& \quad + \left|\int_{\R_+}\int_{\R}{[\cLep, \p_x]\begin{pmatrix}V\\ W\end{pmatrix} \cdot \begin{pmatrix}\dfrac{-\partial_x W}{p'_\ep(\bar v_\ep)}\\ \partial_x W\end{pmatrix}} \right| \nonumber
\end{align}
with 
\begin{align*}
\left|\int_{\R_+}\int_{\R}{[\cLep, \p_x]\begin{pmatrix}V\\ W\end{pmatrix} \cdot \begin{pmatrix}\dfrac{-\partial_x W}{p'_\ep(\bar v_\ep)}\\ \partial_x W\end{pmatrix}} \right| 
\leq \eta \int_{\R_+} \int_{\R}\partial_x \bar v_\ep |\partial_x W|^2
+ \frac{C_1}{\eta}\ep^{-2/\gamma} \int_{\R_+} \int_{\R}{|\partial_x V|^2}.
\end{align*}
The first integral can be absorbed in the left-hand side for small $\eta$. 
For the second integral, we could apply a Gronwall inequality to close the estimate but we would then obtain a bound on the energy that exponentially grows with time. 
Another way to proceed is to multiply inequality~\eqref{eq:k=1} by $\ep^{2/\gamma}$ (eliminating the singularity as $\ep \to 0$), and to combine the result with the estimate on the integrated variables~\eqref{eq:k=0}:
\begin{align}\label{eq:k=1-bis} 
& \int_\R{\left[-\dfrac{1}{p'_\ep(\bar v_\ep)}{|W|^2} + {|V|^2} \right] + \ep^{2/\gamma}\int_\R \left[-\dfrac{1}{p'_\ep(\bar v_\ep)}{|\partial_x W|^2} + {|\partial_x V|^2} \right] } \nonumber \\
& \quad + \int_{\R_+}\int_\R{\partial_x \bar v_\ep \left[|W|^2+ \ep^{2/\gamma}|\partial_x W|^2\right]}
+ \int_{\R_+}\int_\R{\dfrac{(\partial_x V)^2+ \ep^{2/\gamma}(\partial^2_x V)^2}{\bar v_\ep}}  \nonumber\\
& \leq C \int_\R{\left[\left(\dfrac{-1}{p'_\ep(\bar v_\ep)}{|W|^2}\right)_{|t=0} + {|V_{|t=0}|^2} + \ep^{2/\gamma}\left(\left(\dfrac{-1}{p'_\ep(\bar v_\ep)}{|\partial_x W|^2}\right)_{|t=0} + {|\partial_x V_{|t=0}|^2} \right)\right]} \\
& \quad  + C\left| \int_{\R_+}\int_\R \left[G_\ep\frac{W}{-p_\ep'(\bar v_\ep)} + F_\ep \ V\right] \right|
+ C \ep^{2/\gamma}\left|\int_{\R_+}\int_\R \left[\partial_x G_\ep\frac{\partial_x W}{-p_\ep'(\bar v_\ep)} + \partial_x F_\ep \ \partial_x V\right]\right|. \nonumber
\end{align}
The passage to the integrated variables $(V,W)$ is therefore essential for the derivation of global-in-time estimates.\\
In the same manner, for $k=2$, the reader can check that by multiplying by $\ep^{4/\gamma}$ the energy inequality satisfied by $(\partial^2_x V,\partial^2_x W)$ and combining it with~\eqref{eq:k=1-bis}, we can close a weighted energy estimate (recall that the nonlinear terms in $F_\ep$, $G_\ep$ are considered as small perturbations).\\
This explains the structure of the energy in assumption \eqref{eq:init-1}. 
Hence, we define
\[\ba 
E_k(t;V,W):= \int_\R{\left[\dfrac{-1}{p'_\ep(\bar v_\ep(\cdot -s_\ep t)}{|\p_x^k W (t)|^2}+ {|\p_x^k V(t)|^2} \right] dx},\\
D_k(t;V,W):= \int_\R \partial_x\bar v_\ep (\cdot -s_\ep t)|\p_x^k W|^2  dx
+  \int_\R{{(\partial_x^{k+1} V)^2} dx}.
\ea
\]
%
%As a consequence, there exists a constant $C_2$, depending only on $\gamma$, $\mu$ and $v_+$, such that for $\ep$ small enough, for all $(W,V)\in H^k(\R) \times H^{k+1}(\R) $,
%\[
%D_k(t; V, W) \leq C_2 \left(s_\ep\int_\R{\dfrac{p''_\ep(\bar v_\ep)}{(p'_\ep(\ve))^2} \partial_x\bar v_\ep {|\p_x^k W|^2}}
%+ 2\mu \int_\R{\dfrac{(\partial_x^{k+1} V)^2}{\bar v_\ep}}\right).
%\]
The goal is to prove, by a fixed point argument, existence and uniqueness of global smooth solutions $(V,W)$, under the assumption that $E_k(0)$ is small enough for $k=0,1,2$. 
Given the couple $(V_1,W_1)$, we introduce the following system
\begin{align*}\label{eq:syst-fixed-point}
&\p_t \begin{pmatrix} V_2\\ W_2 \end{pmatrix} + \cLep\begin{pmatrix} V_2\\ W_2 \end{pmatrix}
= \begin{pmatrix} F_\ep(\p_x V_1)\\ G_\ep(\p_x V_1) \end{pmatrix}\\
&(V_2, W_2)_{|t=0}=(V_0, W_0)
\end{align*}
and the application
\[
\mathcal A^\ep :( V_1, W_1) \in \cX \mapsto ( V_2, W_2) \in \cX,
\]
where
\[
\cX:= \{ (V,W) \in L^\infty(\R_+;  H^2(\R))^2 ;\ D_k(t;W,V)\in L^1(\R_+) \text{ for }k=0,1,2 \}.
\]
We endow $\cX$ with the norm
\begin{equation}\label{eq:df-norm}
\|(V,W)\|_{\cX}^2:= \sup_{t\in [0,+\infty[} \left[\sum_{k=0}^2 c^k \ep^{2k/\gamma} \left[E_k(t,V(t),W(t)) + \int_0^t D_k(s,V(s),W(s))\:ds\right]
\right],
\end{equation}
where $c$ will be taken small but independent of $\ep$.
For $\delta>0$, we denote by $B_\delta$ the ball 
\begin{equation}
B_\delta = \{(V,W)\in \cX,\ \|(V,W)\|_{\cX}< \delta \ep^{\frac{5}{2\gamma}}\}.
\end{equation}
Under the assumptions of Proposition~\ref{prop:ex-VW}, there exists $\delta = \delta(\delta_0,v_+,\mu,\gamma)$ such that the ball $B_\delta$ is stable by $\mathcal{A}_\ep$ and such that furthermore $\mathcal{A}_\ep$ is a contraction on $B_\delta$. See details of the proof in~\cite{DP}.
\end{proof}

\bigskip
Let us now return to the original variables $(v=\bar v_\ep(\cdot -s_\ep t) + \partial_x V,u)$. In the rest of this paper, we will write $(\bar v_\ep, \bar u_\ep)$ for $(\bar v_\ep, \bar u_\ep)(\cdot -s_\ep t)$ in order to lighten the notation. 

\medskip
\begin{lem}\label{lem:stability-u}
	Assume that initially $(U_0,V_0)\in H^2(\R)\times H^3(\R)$ is such that~\eqref{eq:init-1} is satisfied by the couple $(W_0,V_0)$
	and consider the solution $(W,V) \in B_\delta \subset \cX$ of~\eqref{eq:NS-ep-int} given by the previous proposition.
	Then there exists a unique regular solution $u$ to
	\begin{align}\label{eq:upertub}
    \partial_t(u -\bar u_\ep) - \mu\partial_x\left(\frac{1}{v}\partial_x(u-\bar u_\ep)\right) 
    & = - \partial_x(p_\ep(v) -p_\ep(\bar v_\ep)) 
    + \mu \partial_x\left(\left(\frac{1}{v}-\frac{1}{\bar v_\ep}\right)\partial_x \bar u_\ep \right),
    \end{align} 
	which is such that
	\begin{align}
	u-\bar u_\ep \in \mathcal{C}([0,+\infty); H^1(\R)) \cap L^2([0,+\infty),H^2(\R)), \quad 
	\partial_t(u-\bar u_\ep) \in L^2([0,+\infty)\times \R).
	\end{align}	
\end{lem}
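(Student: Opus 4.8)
The strategy is to read \eqref{eq:upertub} as a \emph{linear} uniformly parabolic Cauchy problem for the unknown $\phi:=u-\bar u_\ep$, whose elliptic coefficient $1/v$ and right-hand side are entirely prescribed by the solution $(V,W)$ of \eqref{eq:NS-ep-int} obtained in Proposition~\ref{prop:ex-VW}, with $v:=\bar v_\ep(\cdot-s_\ep t)+\partial_x V$. Once $\phi$ is constructed one sets $u:=\bar u_\ep+\phi$; using the first line of \eqref{eq:NS-ep-int} one checks that $\partial_t v=\partial_x u$, so that $(v,u)$ is a genuine solution of \eqref{eq:NS-ep} and, consistently, $u=w+\mu\partial_x\ln v$ with $w=\bar w_\ep+\partial_x W$.

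First one records what Proposition~\ref{prop:ex-VW} and the smallness built into $\cX$ give. Unpacking \eqref{eq:df-norm}, for $(V,W)\in B_\delta$ one has $E_k(t)\lesssim\delta^2\ep^{(5-2k)/\gamma}$ for $k=0,1,2$, whence by one-dimensional interpolation $\|\partial_x V\|_{L^\infty}^2\lesssim\|\partial_x V\|_{L^2}\|\partial_x^2V\|_{L^2}\lesssim\delta^2\ep^{2/\gamma}$; for $\delta$ small this forces $|\partial_x V|\le\tfrac12\ep^{1/\gamma}\le\tfrac12(\bar v_\ep-1)$, hence
\[
1+\tfrac12(\bar v_\ep-1)\le v\le v_++\tfrac12\ep^{1/\gamma}.
\]
Thus $1/v$ is uniformly elliptic, $p_\ep(v)$ and the values at $v$ of all derivatives of $p_\ep$ and of $x\mapsto 1/x$ are bounded (with $\ep$-dependent constants, which is harmless here), $v-\bar v_\ep=\partial_x V\in\mathcal C([0,\infty);H^1)\cap L^2(\R_+;H^2)$, and the mass equation provides enough time-regularity of $1/v$ (namely $\partial_t(1/v)\in L^2_{\mathrm{loc}}$).

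Next one checks that the source $f:=-\partial_x(p_\ep(v)-p_\ep(\bar v_\ep))+\mu\partial_x((\tfrac1v-\tfrac1{\bar v_\ep})\partial_x\bar u_\ep)$ lies in $L^2(\R_+;L^2(\R))$. Writing $\partial_x(p_\ep(v)-p_\ep(\bar v_\ep))=(p_\ep'(v)-p_\ep'(\bar v_\ep))\bar v_\ep'+p_\ep'(v)\partial_x^2V$, and using $\bar u_\ep'=-s_\ep\bar v_\ep'$ together with $\tfrac1v-\tfrac1{\bar v_\ep}=-\partial_x V/(v\bar v_\ep)$ and expanding, every term of $f$ is either a bounded coefficient times $\partial_x^2V$, or a bounded and exponentially localised coefficient (built from $\bar v_\ep',\bar v_\ep''$, which decay exponentially at $\pm\infty$) times $\partial_x V$, or a term quadratic in $(\partial_x V,\partial_x^2V)$ carrying a factor $\|\partial_x V\|_{L^\infty}$. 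Since $\int_0^\infty D_0\,dt$ and $\int_0^\infty D_1\,dt$ are finite and control $\int_0^\infty\!\!\int_\R(\partial_x V)^2$ and $\int_0^\infty\!\!\int_\R(\partial_x^2V)^2$, this gives $f\in L^2(\R_+;L^2)$.

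It then remains to solve $\partial_t\phi-\mu\partial_x(\tfrac1v\partial_x\phi)=f$ with $\phi|_{t=0}=U_0$, and to propagate regularity. With $1/v$ uniformly elliptic, $f\in L^2(\R_+;L^2)$ and $U_0\in H^2\subset H^1$, a Galerkin approximation and the energy identities obtained by testing against $\phi$ and against $-\partial_x^2\phi$ give existence and uniqueness of $\phi\in\mathcal C([0,\infty);H^1)\cap L^2_{\mathrm{loc}}(\R_+;H^2)$ with $\partial_t\phi\in L^2_{\mathrm{loc}}$, uniqueness following from the $L^2$ estimate applied to the difference of two solutions. The step I expect to be the real obstacle is the passage from these local bounds to the global ones claimed, $\phi\in L^2([0,\infty);H^2)$ and $\partial_t\phi\in L^2([0,\infty)\times\R)$: on the line there is no spectral gap, so the commutator $\mu\int\partial_x(\tfrac1v)\,\partial_x\phi\,\partial_x^2\phi$ appearing in the $H^1$-estimate, split with Young's inequality, leaves a forcing $C(1+\|\partial_x^2V\|_{L^2}^4)\|\partial_x\phi\|_{L^2}^2$ whose ``$1$'' part, via a naive Gronwall argument, would only yield exponential-in-time growth. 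The remedy is the ``effective velocity / integrated variable'' mechanism already used for \eqref{eq:k=0}--\eqref{eq:k=1-bis}: the time-non-integrable part of the forcing is controlled by comparing $\phi$ with $\partial_x W+\mu\partial_x\ln(1+\partial_x V/\bar v_\ep)$ (which equals $\phi$ since $u=w+\mu\partial_x\ln v$), thereby dominating the relevant norms of $\phi$ by $\|(V,W)\|_{\cX}$, a quantity globally bounded with globally integrable dissipative part by Proposition~\ref{prop:ex-VW}. This closes the global energy estimate, identifies $u=\bar u_\ep+\phi$ as the unique regular solution of \eqref{eq:upertub}, and delivers the announced regularity.
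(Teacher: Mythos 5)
Your overall strategy --- reading \eqref{eq:upertub} as a linear, uniformly parabolic Cauchy problem for $\phi:=u-\bar u_\ep$ whose coefficient $1/v$ and source are entirely prescribed by the solution $(V,W)$ of Proposition~\ref{prop:ex-VW} --- is the intended one (the notes omit the proof of this lemma and defer to \cite{DP}, but this is clearly the route consistent with the rest of Section~\ref{sec:stab-ep}). Your preliminary steps are sound: the interpolation bound $\|\partial_x V\|_{L^\infty}\lesssim \delta\,\ep^{1/\gamma}$, hence $v\geq 1+\tfrac12\ep^{1/\gamma}$, is exactly the computation reused in the proof of Theorem~\ref{thm:estimates}, and your verification that the right-hand side of \eqref{eq:upertub} belongs to $L^2(\R_+;L^2(\R))$ via the finiteness of $\int_0^\infty D_0$ and $\int_0^\infty D_1$ is correct.

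The gap is in your closure of the \emph{global-in-time} $H^1$ estimate. The remedy you propose --- ``comparing $\phi$ with $\partial_x W+\mu\partial_x\ln(1+\partial_x V/\bar v_\ep)$'' --- has two defects. First, the identity $u=w+\mu\partial_x\ln v$ is not available at the stage where you invoke it: for the parabolic solution $\phi$ it only follows \emph{a posteriori}, by uniqueness in a class large enough to contain both $\phi$ and $\bar w_\ep+\partial_x W+\mu\partial_x\ln v-\bar u_\ep$, so using it inside the a priori estimate is circular unless that identification is carried out first. Second, and more decisively, even granting the identity it does not give what you need: the dissipation functionals $D_k$ control only the \emph{weighted} quantities $\int_\R \partial_x\bar v_\ep\,|\partial_x^k W|^2$, not $\|\partial_x^k W\|_{L^2(\R)}^2$, so $\|(V,W)\|_{\cX}$ does not bound $\int_0^\infty\|\partial_x^2 W\|_{L^2}^2\,dt$ and hence does not bound $\int_0^\infty\|\partial_x\phi\|_{L^2}^2\,dt$ through that identity. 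The repair is simpler and stays inside the parabolic problem: write the source as $\partial_x g$ with $g=-(p_\ep(v)-p_\ep(\bar v_\ep))+\mu\left(\tfrac1v-\tfrac1{\bar v_\ep}\right)\partial_x\bar u_\ep$, observe that $|g|\lesssim_\ep |\partial_x V|$ so that $g\in L^2(\R_+\times\R)$ by $\int_0^\infty D_0<\infty$, and test with $\phi$ after an integration by parts; this yields $\sup_t\|\phi\|_{L^2}^2+\int_0^\infty\|\partial_x\phi\|_{L^2}^2\,dt<\infty$ with no Gronwall factor at all. Feeding this time-integrability of $\|\partial_x\phi\|_{L^2}^2$ into the first-order estimate neutralizes the bounded, non-integrable part of the Gronwall coefficient (the ``$1$'' you worried about), while the remaining part of the coefficient is $\lesssim 1+D_2(t)\in L^1(\R_+)$; the estimate then closes globally and the announced regularity follows.
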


Let us note that, until now, we did not justify properly the passage to the integrated system~\eqref{eq:NS-ep-int}.
The equivalence between the original system~\eqref{eq:NS-ep} and~\eqref{eq:NS-ep-int} is established by proving $L^1$ bounds on $v-\bar v_\ep$, $u-\bar u_\ep$, $w- \bar w_\ep$.

\begin{lem}
Assume that the initial data $(u_0,v_0)$ is such that
\[
u_0 - \bar u_\ep \in W^{1,1}_0(\R)\cap H^1(\R), \quad
v_0 - \bar v_\ep \in W^{2,1}_0(\R)\cap H^2(\R).
\]
Then for all times $t\geq 0$, $(v-\bar v_\ep)(t,\cdot)$ and $(u-\bar u_\ep)(t,\cdot)$ belong to $L^1_0(\R)$.
\end{lem}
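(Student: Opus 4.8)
The goal is to propagate the zero-mass property of the perturbations in the original variables. The key observation is that the integrated quantities $V$ and $W$ from Proposition~\ref{prop:ex-VW} are already globally defined and lie in $\mathcal C([0,+\infty);H^2(\R))$, and moreover the zero-mass conditions on the initial data are precisely what allow $V_0$ and $W_0$ to be well-defined $H^2$ functions (the $W^{2,1}_0$ assumption on $v_0-\bar v_\ep$ gives $V_0 = \int_{-\infty}^x (v_0-\bar v_\ep)\in H^2$ with $V_0(-\infty)=0$, and similarly $W_0$; note $w_0 - \bar w_\ep = (u_0-\bar u_\ep) - \mu\partial_x\ln(v_0/\bar v_\ep)$ which is $W^{1,1}$ under the stated hypotheses). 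So the first step is to verify that the hypotheses here imply assumption~\eqref{eq:init-1} (possibly after also requiring the quantitative smallness already present in that assumption — the statement as written should be read together with the smallness needed to invoke the proposition), and that $V_0,W_0\in H^2$ with the correct decay at $-\infty$.

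\medskip
\noindent Second, I would show that $V(t,\cdot)$ and $W(t,\cdot)$ decay to $0$ as $x\to -\infty$ \emph{and} as $x\to +\infty$ for every $t$. Decay at $-\infty$ is automatic since $V,W\in H^2(\R)$ and $V(t,-\infty)=W(t,-\infty)=0$ by construction (the lower limit of integration). The content is the limit at $+\infty$: one must show $V(t,+\infty)=0$, i.e. that the total mass $\int_\R (v-\bar v_\ep)(t,z)\,dz$ vanishes. This follows by integrating the first equation of~\eqref{eq:NS-ep-int} in $x$ over all of $\R$: since $V\in H^2(\R)$, $\partial_x V, \partial_x W$ and $\partial_x\ln(1+\partial_x V/\bar v_\ep)$ all tend to $0$ at $\pm\infty$, so $\frac{d}{dt}\big(V(t,+\infty)-V(t,-\infty)\big)=0$; with $V(t,-\infty)=0$ and $V(0,+\infty)=0$ (zero-mass initial data) we get $V(t,+\infty)=0$ for all $t$. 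The same argument applied to the second equation of~\eqref{eq:NS-ep-int} gives $\frac{d}{dt}W(t,+\infty) = -\big(p_\ep(\bar v_\ep+\partial_x V)-p_\ep(\bar v_\ep)\big)\big|_{x=+\infty}=0$ since $\partial_x V\to 0$, hence $W(t,+\infty)=W(0,+\infty)=0$.

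\medskip
\noindent Third, having $V(t,\cdot)\in H^2(\R)$ with $V(t,\pm\infty)=0$, it follows that $v(t,\cdot)-\bar v_\ep(t,\cdot)=\partial_x V(t,\cdot)$ is in $H^1(\R)\subset L^1_{\mathrm{loc}}$, and crucially $\int_\R (v-\bar v_\ep)(t,z)\,dz = V(t,+\infty)-V(t,-\infty)=0$. To conclude $(v-\bar v_\ep)(t,\cdot)\in L^1_0(\R)$ one still needs integrability at infinity; this I would obtain either from an exponential/algebraic spatial decay estimate on $\partial_x V$ (coming from the structure of $\bar v_\ep$, which is constant outside a bounded region, and from parabolic smoothing in the equation for $V$), or more simply by propagating a weighted-$L^1$ bound: multiply the $V$-equation by $\mathrm{sgn}(\partial_x V)$ after differentiating in $x$, or directly estimate $\|v-\bar v_\ep\|_{L^1}$ via a Kato-type inequality for the (degenerate) parabolic equation satisfied by $v-\bar v_\ep$. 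The same scheme handles $w-\bar w_\ep=\partial_x W$, and then $u-\bar u_\ep=(w-\bar w_\ep)+\mu\partial_x\ln(v/\bar v_\ep)$: the second term is $\partial_x$ of an $L^1$ function of zero mass (namely $\ln(v/\bar v_\ep)$, which is controlled since $v,\bar v_\ep$ are bounded below and $v-\bar v_\ep\in L^1$), hence integrates to zero, while $w-\bar w_\ep$ has zero mass by the above, so $u-\bar u_\ep\in L^1_0(\R)$.

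\medskip
\noindent \textbf{Main obstacle.} The delicate point is the $L^1$ integrability at spatial infinity — i.e. upgrading the $H^2$ bounds on $(V,W)$ (which only give $L^2$ decay of $\partial_x V,\partial_x W$) to genuine $L^1$ control of $v-\bar v_\ep$ and $u-\bar u_\ep$. The energy framework of Proposition~\ref{prop:ex-VW} is purely $L^2$-based and does not see $L^1$; one needs an independent $L^1$ propagation estimate, which is somewhat subtle because the diffusion term $\mu\partial_x^2\ln v$ degenerates where $v$ is close to $1$ (the congested regime) and because the weight $-p_\ep'(\bar v_\ep)$ blows up. The cleanest route is a Kato-inequality argument directly on the equation $\partial_t(v-\bar v_\ep) = \partial_x(u-\bar u_\ep)$ combined with the $w$-equation, or a duality/characteristics argument using that $\bar v_\ep$ is spatially constant far from the front so that far-field the perturbation solves an essentially linear parabolic system with good $L^1\to L^1$ bounds; this is where the real work of the forthcoming companion paper lies, and here I would only sketch it.
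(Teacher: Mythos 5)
Your proposal correctly handles the easier half of the statement (the zero-mass bookkeeping via the limits $V(t,\pm\infty)$ and $W(t,\pm\infty)$, which is sound and in fact more explicit than what the paper writes), but it leaves open exactly the point that constitutes the content of the lemma: the propagation of $L^1$ integrability. You identify this as the ``main obstacle'' and list a Kato-type inequality as one of several possible routes, but you do not carry it out; as written, your argument only shows that \emph{if} $(v-\bar v_\ep)(t,\cdot)$ is integrable then its integral vanishes, since $V\in H^2(\R)$ gives no $L^1$ control of $\partial_x V$. Note also that your starting point is somewhat backwards relative to the role of the lemma in the paper: the $L^1$ bounds are precisely what legitimates the definition of the integrated variables $(V,W)$ from a solution of the original system, so the estimate should be performed on the original (non-integrated) equations rather than deduced from~\eqref{eq:NS-ep-int}.

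The paper's proof is exactly the Kato argument you mention as one option. One takes a sequence $(j_n)$ of $\mathcal C^2$ convex functions converging to $|\cdot|$ in $W^{1,\infty}$, multiplies the equations satisfied by $u-\bar u_\ep$ and $w-\bar w_\ep$ by $j_n'(u-\bar u_\ep)$ and $j_n'(w-\bar w_\ep)$ respectively, and integrates by parts; convexity of $j_n$ makes the diffusion terms nonnegative (this does not suffer from the degeneracy you worry about, since only the sign of the coefficient $1/v>0$ matters), and the remaining terms are bounded by $C_\ep\left(1+\|u-\bar u_\ep\|_{L^1}+\|w-\bar w_\ep\|_{L^1}\right)$ using $W^{2,1}$ and $W^{1,\infty}$ bounds on the profile and the $L^\infty_t W^{1,\infty}_x$ control of the perturbation already available from the energy estimates. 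Letting $n\to\infty$ and applying Gronwall yields
\[
\|(u-\bar u_\ep)(t)\|_{L^1}+\|(w-\bar w_\ep)(t)\|_{L^1}\leq C_\ep\left[\|u_0-\bar u_\ep\|_{L^1}+\|w_0-\bar w_\ep\|_{L^1}\right]e^{C_\ep t},
\]
with a similar estimate derived afterwards for $v-\bar v_\ep$. The bound grows exponentially in time and degenerates as $\ep\to 0$, but finiteness for each fixed $t$ is all that is needed. Supplementing your write-up with this estimate, together with the zero-mass argument you already have, would complete the proof.
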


\begin{proof}[Idea of the proof]
We first derive $L^1$ bounds on $u-\bar u_\ep $ and $w-\bar w_\ep$, which follow from ideas from~\cite{haspot2018}.
We consider a sequence $(j_n)_{n\in \mathbb N}$ of $\mathcal C^2$, convex functions, converging as $n\to +\infty$ towards $|\cdot|$ in $W^{1,\infty}$. We multiply the equation on $u-\bar u_\ep$ (resp. on $w-\bar w_\ep$) by $j_n'(u-\bar u_\ep)$ (resp. $j_n'(w-\bar w_\ep)$) and perform integrations by part.
Using the convexity of $j_n$, we observe that the diffusion term has a positive sign.
We obtain eventually
\[
\frac{d}{dt}\int_{\R} \left(j_n(u-\bar u_\ep) + j_n (w-\bar w_\ep)\right) \leq C_\ep\left( 1 + \|u-\bar u_\ep\|_{L^1} +  \|w-\bar w_\ep\|_{L^1} \right).
\]
The constant $C_\ep$ involves bounds on $ \bar u_\ep, \bar v_\ep$ in various Sobolev spaces ($W^{2,1}$, $W^{1,\infty} $),  on $\|w-\bar w_\ep\|_{L^\infty(\R_+, W^{1,\infty}(\R))}$ and on $\|u-\bar u_\ep\|_{L^\infty(\R_+, W^{1,\infty}(\R))}$. Integrating in time, letting $n\to \infty$ and using a Gronwall lemma, we obtain
	\begin{align} \label{eq:L^1-uw}
	\|(u-\ue)(t)\|_{L^1_x} + \|(w-\we)(t)\|_{L^1_x}
	\leq C_\ep \Big[\|u_0-\ue(0)\|_{L^1_x} + \|w_0-\we(0)\|_{L^1_x}\Big] \ e^{C_\ep t}.
	\end{align}
We then derive similar estimates on $v-\bar v_\ep$.

The obtained estimate is local in time and depends on $\ep$ but we are just interested in the fact that $(v-\bar v_\ep)(t,\cdot)$, $(u-\bar u_\ep)(t,\cdot)$ belong to $L^1(\R)$ to legitimate the study of the integrated system~\eqref{eq:NS-ep-int}.
\end{proof}

Finally, we have the following result.
\begin{thm}[Nonlinear asymptotic stability of partially congested profiles]\label{thm:estimates}
	Assume that the initial data $(u_0,v_0)$ is such that
	\[
	u_0 - \bar u_\ep \in W^{1,1}_0(\R)\cap H^1(\R), \quad
	v_0 - \bar v_\ep \in W^{2,1}_0(\R)\cap H^2(\R),
	\]
	and the associated couple $(W_0,V_0) \in H^2\times H^3 (\R)$ satisfies~\eqref{eq:init-1}.
	Then there exists a unique global solution $(u,v)$ to~\eqref{eq:NS-ep} which satisfies
	\begin{align*}
	& u-\bar u_\ep \in \mathcal{C}([0;+\infty);H^1(\R)\cap L^1_0(\R)), \\
	& v-\bar v_\ep \in \mathcal{C}([0;+\infty);H^1(\R)\cap L^1_0(\R)) \cap L^2(\R_+; H^2(\R)),
	\end{align*}	
	and
	\begin{equation}\label{eq:min-v}
	v(t,x) > 1 \quad \text{for all}~ t,x.
	\end{equation}
	Finally
	\begin{equation}
	\sup_{x\in \R} \ \Big| \big((v,u)(t,x) - (\bar v_\ep,\bar u_\ep)(x-s_\ep t)\big)\Big| \underset{t\rightarrow +\infty}{\longrightarrow} 0.
	\end{equation}
\end{thm}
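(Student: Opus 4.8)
The plan is to assemble the theorem from the three preparatory lemmas already established, the only genuinely new work being (i) the passage from the integrated solution $(V,W)$ back to the original unknowns $(v,u)$, (ii) the positivity of $v$, and (iii) the long-time decay. First I would invoke the $L^1$ lemma: under the hypotheses $u_0-\bar u_\ep\in W^{1,1}_0\cap H^1$ and $v_0-\bar v_\ep\in W^{2,1}_0\cap H^2$, the primitives $V_0=\int_{-\infty}^\cdot(v_0-\bar v_\ep)$ and $W_0=\int_{-\infty}^\cdot(u_0-\bar u_\ep-\mu\partial_x\ln v_0+\mu\partial_x\ln\bar v_\ep)$ are well defined, belong to $H^2\times H^3(\R)$ (one derivative is gained by integration, two more are inherited from $v_0-\bar v_\ep\in H^2$ and from $u_0-\bar u_\ep\in H^1$), and by assumption satisfy the smallness condition~\eqref{eq:init-1}. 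Proposition~\ref{prop:ex-VW} then furnishes a unique global $(V,W)\in\mathcal C([0,\infty);H^2)\cap L^2(\R_+;H^3)\times\mathcal C([0,\infty);H^2)$ lying in the ball $B_\delta\subset\cX$. Setting $v:=\bar v_\ep(\cdot-s_\ep t)+\partial_x V$ recovers the specific volume, which is automatically in $\mathcal C([0,\infty);H^1)\cap L^2(\R_+;H^2)$; Lemma~\ref{lem:stability-u} then produces the corresponding $u$ solving~\eqref{eq:upertub} with $u-\bar u_\ep\in\mathcal C([0,\infty);H^1)\cap L^2(\R_+;H^2)$ and $\partial_t(u-\bar u_\ep)\in L^2$. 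One checks that $(v,u)$ so defined solves~\eqref{eq:NS-ep}: differentiating the $V$-equation in $x$ returns the mass equation, and Lemma~\ref{lem:stability-u} is precisely the $u$-equation, so the pair is a strong solution; uniqueness follows from the uniqueness in each of the two lemmas together with the $L^1$ equivalence of~\eqref{eq:NS-ep} and~\eqref{eq:NS-ep-int} established above.

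For the membership in $L^1_0$, I would combine the two $L^1$-type inputs: the $L^1$ lemma gives $(v-\bar v_\ep)(t),(u-\bar u_\ep)(t)\in L^1(\R)$ for every $t$, while the zero-mass property is conserved because $V(t,x)\to 0$ as $x\to+\infty$ (equivalently, $V(t,\cdot)\in H^2$ forces $\int_\R(v-\bar v_\ep)=0$), and similarly $\int_\R(w-\bar w_\ep)=0$ propagates from the initial zero-mass assumption through the $W$-equation, whence $\int_\R(u-\bar u_\ep)=\int_\R(w-\bar w_\ep)+\mu\int_\R\partial_x\ln(v/\bar v_\ep)=0$ since the last integral is the variation of an $H^1$ function. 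The positivity~\eqref{eq:min-v} is where the smallness in terms of $\ep$ is used: from $(V,W)\in B_\delta$ one has $\|\partial_x V\|_{L^\infty}\lesssim\|V\|_{H^2}\lesssim\|(V,W)\|_\cX\lesssim\delta\,\ep^{5/(2\gamma)}$, which for $\delta$ small is $o(\ep^{1/\gamma})$; since $\bar v_\ep(x-s_\ep t)\geq v_-^\ep=1+\ep^{1/\gamma}$ on the congested side, we get $v=\bar v_\ep+\partial_x V\geq 1+\tfrac12\ep^{1/\gamma}>1$ uniformly. (On the free side $\bar v_\ep$ is bounded away from $1$ and there is nothing to prove.)

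The decay $\sup_x|(v,u)(t,x)-(\bar v_\ep,\bar u_\ep)(x-s_\ep t)|\to 0$ is the main obstacle and the place where the structure of the energy/dissipation pair really matters. The plan is the standard interpolation–dissipation argument: from $(V,W)\in\cX$ the quantities $\int_0^\infty D_k(s;V,W)\,ds$ are finite for $k=0,1,2$, so in particular $\int_0^\infty\|\partial_x V(s)\|_{H^1}^2\,ds<\infty$ and $\int_0^\infty\int_\R\partial_x\bar v_\ep\,|\partial_x W|^2<\infty$; combined with the uniform bound on $E_k(t)$ and on $\frac{d}{dt}E_k(t)$ (the latter controlled by the same energy inequalities~\eqref{eq:k=0}–\eqref{eq:k=1-bis}, whose right-hand sides are integrable), a Barbalat-type lemma gives $\|\partial_x V(t)\|_{H^1}\to 0$, hence $\|v(t)-\bar v_\ep(\cdot-s_\ep t)\|_{L^\infty}\lesssim\|\partial_x V(t)\|_{H^1}\to 0$ by Sobolev embedding $H^1\hookrightarrow L^\infty$. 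The velocity is subtler because the dissipation only sees $\partial_x W$ weighted by $\partial_x\bar v_\ep$, which degenerates at $\pm\infty$ and on the congested side; here I would instead run the parallel energy estimate on $u-\bar u_\ep$ furnished by Lemma~\ref{lem:stability-u} — that lemma already yields $\partial_t(u-\bar u_\ep)\in L^2(\R_+\times\R)$ and $u-\bar u_\ep\in L^2(\R_+;H^2)$, so $\frac{d}{dt}\|u-\bar u_\ep\|_{H^1}^2$ is integrable in time and $\|u-\bar u_\ep\|_{H^1}\to 0$ by the same Barbalat argument, whence $\|u-\bar u_\ep\|_{L^\infty}\to 0$. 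The delicate points to watch are the uniformity of all constants as $\ep\to 0$ (needed only qualitatively here, for fixed $\ep$, but cleanly available from the $\ep$-weighted norm~\eqref{eq:df-norm}) and the fact that the weight $-1/p_\ep'(\bar v_\ep)$ in $E_k$ is bounded below by a positive constant on the free side and comparable to $\ep^{-1}(\bar v_\ep-1)^{\gamma+1}\sim\ep^{1/\gamma}/\ep=\ep^{(1-\gamma)/\gamma}$ on the congested side, so no coercivity is lost; this is exactly the computation $\frac{p_\ep''}{(p_\ep')^2}\geq\frac{\gamma+1}{\gamma}$ highlighted after~\eqref{eq:k=0}.
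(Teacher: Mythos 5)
Your proposal follows essentially the same route as the paper's proof: assemble Proposition~\ref{prop:ex-VW}, Lemma~\ref{lem:stability-u} and the $L^1$ lemma to build and identify the solution, obtain \eqref{eq:min-v} from an $L^\infty$ interpolation bound on $\partial_x V$ inside $B_\delta$, and deduce the decay from the time-integrability of the dissipation combined with the integrability of the time derivative (the paper phrases this through $v-\bar v_\ep\in L^2(\R_+;H^2)$ and $\partial_t(v-\bar v_\ep)=\partial_x(u-\bar u_\ep)\in L^2(\R_+;H^1)$ rather than a Barbalat lemma, but the mechanism is identical). One quantitative slip in your positivity argument: the inequality $\|V\|_{H^2}\lesssim\|(V,W)\|_{\cX}$ ignores the weights $c^k\ep^{2k/\gamma}$ in the norm \eqref{eq:df-norm}; the correct chain is $\|\partial_x V\|_{L^\infty}\leq C\|\partial_x V\|_{L^2}^{1/2}\|\partial_x^2 V\|_{L^2}^{1/2}\leq C\ep^{-3/(2\gamma)}\|(V,W)\|_{\cX}\leq C\delta\ep^{1/\gamma}$, which still yields $v>1$ once $\delta$ is small, but not the stronger $O(\ep^{5/(2\gamma)})=o(\ep^{1/\gamma})$ rate you claim.
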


\begin{proof}[Idea of the proof]
It basically remains us to justify the minimal constraint~\eqref{eq:min-v} and the long-time behavior.\\
For the first point, we write that for $(W,V) \in B_{\delta}= \{ (W,V) \in \mathcal{X}, ~ \|(W,V)\|_{\mathcal{X}} \leq \delta \ep^{\frac{5}{2\gamma}} \}$
\begin{align*}
\|v-\bar v_\ep\|_{L^\infty_{t,x}} 
= \|\partial_x V\|_{L^\infty_{t,x}}
& \leq C \|\partial_x V\|_{L^\infty_t L^2_x}^{1/2} \, \|\partial_x^2 V\|_{L^\infty_t L^2_x}^{1/2} \\
& \leq C \ep^{-\frac{3}{2\gamma}} \|(W,V)\|_{\mathcal{X}} \\
& \leq C \delta \ep^{\frac{1}{\gamma}} ,
\end{align*}
so that for $\delta$ small enough
\[
\|v-\bar v_\ep\|_{L^\infty_{t,x}} < \ep^{\frac{1}{\gamma}}.
\]
Recalling that $\bar v_\ep \geq v_-^\ep =1+\ep^{1/\gamma}$, we deduce that $v(t,x)>1$ for all $t > 0$, $x \in \R$.\\
The asymptotic stability of $(\bar v_\ep,\bar u_\ep)$ easily derives from the regularity of $(v -\bar v_\ep,u-\bar u_\ep)$. 
Indeed, we have on the $v$ variable
\[
v-\ve = \partial_x V \in L^2([0,+\infty);H^2(\R)), \qquad
\partial_t (v-\ve) = \partial_x(u-\ue) \quad \text{in} \quad L^2([0,+\infty);H^1(\R)),
\]
and we infer that
\[
\|(v-\bar v_\ep)(t)\|_{H^1_x} \underset{t\rightarrow +\infty}{\longrightarrow} 0.
\]
As a consequence, we have
\[
|(v-\bar v_\ep)(t,x)| \leq C \|(v -\bar v_\ep)(t)\|_{L^2_x}^{1/2} \|\partial_x(v -\bar v_\ep)(t)\|_{L^2_x}^{1/2}  \underset{t\rightarrow +\infty}{\longrightarrow} 0.
\]
Similar arguments show the uniform convergence of $u -\bar u_\ep$ to $0$ as $t\to +\infty$.
\end{proof}

\section{Local well-posedness result for the free-congested Navier-Stokes equations}
{\label{sec:LWP}}

In this section, we consider the system \eqref{eq:NS-lim}-\eqref{eq:constraint} endowed with an initial data $(u^0, v^0)$. 
To the best of our knowledge, the study of the Cauchy problem for this system has not been tackled before.
Our purpose is similar to the one of the previous section. 
We consider the travelling wave $(\bar v, \bar u)(x-st)$ constructed in Section~\ref{sec:profiles}. 
We start from an initial data which is a perturbation of this profile, and we construct strong local solutions of the system.

However, due to the nature of the system \eqref{eq:NS-lim}-\eqref{eq:constraint}, we will not consider arbitrary perturbations.
We restrict our study to initial data which are perturbations of the profile $(\bar v, \bar u)$ in the non-congested zone only. 
As time evolves, this will allow us to have a more simple description of the non-congested zone, which will simply be a half-line $]\tilde x(t), +\infty[$.
Let us now explicit our assumptions on the initial data~$(\tu^0, \tv^0)$:
\begin{enumerate}[(H1)]
\item{\it  Partially congested initial data: }$(u^0, v^0)\in (\bu, \bv) + L^1(\R)$, and such that $u^0(x)=\bu(x)=u_-$, $v^0(x)=\bv(x)=1$ if $x<0$;

\item {\it Regularity: }$\mathbf 1_{x>0}(\tu^0-\bu, \tv^0-\bu)\in  H^3(\R_+)$; %and
% {\color{blue}*** pas necessaire en temps court
% $\mathbf 1_{x>0} \sqrt{x}\p_x^k \tw^0\in L^2(\R_+) $ for $k=1,2$, where $\tw^0 = \tu^0-\mu \p_x \ln \tv^0$;
% ***}

\item {\it Compatibility:} $\tu^0(0^+)=u_-$, $\tv^0(0^+)=1$, and
\begin{equation}
%\tx(0) = 0, \quad \tx'(0) = - \left[\dfrac{\partial_x \tu^0}{\partial_x \tv^0} \right]_{|x=0}, \quad
\left[-\frac{(\p_x \tu^0)^2}{\p_x \tv^0} - \mu \p_x \tv^0 \p_x u^0 + \mu \p_x^2 \tv^0\right]_{|x=0^+}=0;
\end{equation}

\item{\label{item:no-deg}} {\it Non-degeneracy:}
$\p_x \tv^0(0^+)>0$, $\p_x \tu^0(0^+)>0$ and $\tv^0(x)>1$ for $x>0$;

% \item {\it Decay: }$\mathbf 1_{x>0}V^0\in L^2(\R_+)$, where $V^0(x):=-\int_x^\infty (v^0-\bv)$, and  $\mathbf 1_{x>0} (1+ \sqrt{x}) W^0 \in L^2(\R_+) $, where $W^0:=-\int_x^\infty (\tw^0-u_+)$.

\end{enumerate}

Under these assumptions, the solution of \eqref{eq:NS-lim}-\eqref{eq:constraint} associated with $(u^0, v^0)$, if it exists, will not be a travelling wave. 
However, it is reasonable to expect such a solution to be congested in a zone $x<\tx(t)$, and non-congested in a zone $x>\tx(t)$, where the free boundary $x=\tx(t)$ is an unknown of the problem. 
We actually announce the following result:

\begin{thm}[Local in time existence and uniqueness]
Let  $(\tu^0, \tv^0)$ satisfying the assumptions (H1)-(H4).
Then there exists $T>0$ and $\tx \in H^2_\text{loc}([0,T[)$, with $\tx(0) = 0$, $\tx'(0) = -[\frac{\partial_x\tu^0}{\partial_x \tv^0}]_{x=0^+}$, such that~\eqref{eq:NS-lim} has a unique maximal solution $(\tu,\tv)$ of the form $(\tu,\tv)(t,x)= (u_s, v_s)(t, x-\tx(t))$ on the interval $[0, T[$, where $u_s(t,x)= u_-$, $v_s(t,x)= 1$ and $p_s(t,x) = -\mu (\partial_x u_s)_{|x=0^+}$ for $x<0$. Furthermore, 
\begin{equation}
v_s(t,x) > 1 \quad \text{for all}~ t\in [0,T[, ~x > 0,
\end{equation}
and the solution $(u_s,v_s)$ has the following regularity in the free domain:
\begin{align}
v_s - \bv, \ u_s -\bu \in L^\infty([0,T[; H^3(\R_+)), \label{reg-uv}\\
\partial_t(v_s -\bv), ~ \partial_t(u_s-\bu) 
\in L^\infty([0,T[;H^1(\R_+)) \cap L^2(]0,T[; H^2(\R_+)).\label{reg-dtuv}
\end{align}
Eventually, the pressure in the congested domain satisfies
\begin{equation}
p_s \in H^1(0,T).
\end{equation}
\label{thm:main-loc}
\end{thm}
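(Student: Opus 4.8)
## Proof Strategy for Theorem~\ref{thm:main-loc}

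\textbf{Reformulation as a fixed-boundary problem.} The plan is to remove the free boundary by the change of variables $(t,x)\mapsto(t,y)$ with $y = x - \tx(t)$, so that the unknown interface is mapped to $\{y=0\}$ and the free domain becomes the fixed half-line $\R_+$. In these coordinates, writing $(u_s,v_s)(t,y)$ for the transported unknowns, system~\eqref{eq:NS-lim} acquires an extra transport term $\tx'(t)\partial_y$ in both equations, and the congested side $y<0$ forces $v_s\equiv 1$, $u_s\equiv u_-$. The matching conditions at $y=0^+$ — continuity of $v_s$ (hence $v_s(t,0^+)=1$), continuity of $u_s$, and continuity of the momentum flux $p_s - \mu\,v_s^{-1}\partial_y u_s$ — together with the mass equation evaluated at the interface yield the ODE for the boundary: from $\partial_t v_s - \tx'\partial_y v_s - \partial_y u_s = 0$ at $y=0^+$ combined with $v_s(t,0^+)=1$ one gets $\tx'(t) = -\big[\partial_y u_s/\partial_y v_s\big]_{y=0^+}$, which is exactly the stated expression for $\tx'(0)$ and simultaneously closes the coupling between the interface motion and the trace of the solution. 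The pressure $p_s(t)$ in $y<0$ is then \emph{reconstructed} a posteriori as $p_s(t) = -\mu(\partial_y u_s)_{|y=0^+}$.

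\textbf{Effective velocity and the parabolic structure.} As in Section~\ref{sec:stab-ep}, I would pass to the effective velocity $w_s := u_s - \mu\partial_y\ln v_s$, turning the mass equation into $\partial_t v_s - \tx'\partial_y v_s - \partial_y w_s - \mu\partial_y^2\ln v_s = 0$, a quasilinear parabolic equation for $v_s$ on $\R_+$, coupled to $\partial_t w_s - \tx'\partial_y w_s = 0$ (a transport equation, since $p=0$ in the free zone) — or, keeping the $u$ formulation, the parabolic equation~\eqref{eq:upertub}-type structure for $u_s - \bu$. Subtracting the profile equations~\eqref{eq:EDO-prof} satisfied by $(\bv,\bu)$ and setting $\mathfrak v := v_s - \bv$, $\mathfrak u := u_s - \bu$, one obtains a system on $\R_+$ for $(\mathfrak u,\mathfrak v)$ with: a nondegenerate parabolic operator $\mu\partial_y(v_s^{-1}\partial_y\cdot)$ acting on the perturbation (nondegeneracy because $v_s$ stays bounded away from $1$ and $\infty$ by (H4) and continuity), lower-order and quadratic terms, the transport term $\tx'\partial_y$, and boundary conditions at $y=0$: $\mathfrak v(t,0)=0$, $\mathfrak u(t,0)=0$ (Dirichlet, since the traces are pinned to $(1,u_-)$), the boundary moving according to the ODE above, plus the compatibility relation (H3) which is precisely the statement that the initial data makes $\partial_t\mathfrak v$, $\partial_t\mathfrak u$ admissible traces at $t=0$ — i.e. it is the zeroth-order compatibility condition for an $H^3$-in-space, $H^1$-in-time solution of the parabolic problem.

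\textbf{Fixed-point scheme and energy estimates.} The construction proceeds by a contraction argument on a short time interval $[0,T]$: given $(\mathfrak u_1,\mathfrak v_1)$ and a candidate boundary curve $\tx_1$ in a suitable ball of $L^\infty([0,T];H^3(\R_+))\times H^2([0,T])$, freeze the coefficients $v_{s,1} = \bv + \mathfrak v_1$ and the speed $\tx_1'$, solve the resulting \emph{linear} parabolic problem for $(\mathfrak u_2,\mathfrak v_2)$ on the fixed half-line with Dirichlet data, and update $\tx_2$ by integrating $\tx_2'(t) = -[\partial_y u_{s,2}/\partial_y v_{s,2}]_{y=0^+}$ (well-defined since $\partial_y v_{s,2}(t,0^+)$ stays positive for small $T$ by (H4) and trace continuity in time). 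The core is the high-order energy estimate: multiplying the equations on $\partial_y^k(\mathfrak u,\mathfrak v)$, $k=0,1,2,3$, by the appropriate weights and integrating by parts on $\R_+$, controlling the boundary terms at $y=0$ using the compatibility condition and trace/interpolation inequalities, and absorbing the transport and commutator terms since $\|\tx'\|_{L^\infty}$ is small on $[0,T]$. Parabolic regularity gives the $L^2([0,T];H^2)$ bound on $\partial_t(\mathfrak u,\mathfrak v)$ in~\eqref{reg-dtuv}, and then $p_s\in H^1(0,T)$ follows from $p_s = -\mu(\partial_y u_s)_{|0^+}$ together with the trace theorem $\partial_y u_s|_{y=0}\in H^1(0,T)$ coming from $u_s - \bu\in L^\infty H^3\cap H^1_t H^1$. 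Uniqueness and the maximality of $T$ are obtained by a standard continuation/Gronwall argument on the difference of two solutions.

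\textbf{Main obstacle.} The hard part is the coupled treatment of the free boundary: the interface speed $\tx'$ depends on the \emph{trace of the first $y$-derivative} of the solution at $y=0$, so estimating $\tx$ in $H^2([0,T])$ requires controlling $\partial_y(\mathfrak u,\mathfrak v)|_{y=0}$ in $H^1([0,T])$, which sits at the top of the regularity scale and forces one to close the energy estimates at order $k=3$ in space with matching time regularity — this is exactly why $H^3(\R_+)$ is the natural space and why the compatibility condition (H3) cannot be dispensed with. Handling the boundary terms generated by integration by parts at $y=0$ (they involve precisely these borderline traces) without losing derivatives, and showing that the quadratic term $(\partial_y\mathfrak u)^2/\partial_y v_s$ and the transport term do not destroy the contraction, is the technical heart of the forthcoming paper.
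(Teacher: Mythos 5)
Your overall architecture (flattening the free boundary, deriving the interface ODE $\tx'=-[\p_x u_s/\p_x v_s]_{|x=0^+}$ from the trace of the mass equation, passing to the effective velocity, closing high-order energy estimates on the half-line, and reading off $p_s\in H^1(0,T)$ from the trace of $\p_x u_s$) matches the paper. But one idea on which the paper's whole scheme hinges is missing, and without it your fixed point does not close. You observe that $w_s$ satisfies the transport equation \eqref{eq:ws}, but you never use its consequence: $w_s(t,x)=w^0(x+\tx(t))$ is \emph{explicitly} determined by the initial data and the shift. This is what makes the mass equation genuinely parabolic. In your formulation the ``quasilinear parabolic equation'' $\p_t v_s-\tx'\p_y v_s-\mu\p_y^2\ln v_s=\p_y w_s$ is circular, since $\p_y w_s=\p_y u_s-\mu\p_y^2\ln v_s$ brings you straight back to the original (purely hyperbolic) equation for $v_s$; freezing coefficients and solving ``the resulting linear parabolic problem for $(\mathfrak u_2,\mathfrak v_2)$'' therefore yields no smoothing on $\mathfrak v_2$, so you cannot reach $v_s-\bv\in L^\infty H^3$, nor the $H^1(0,T)$ control of the trace $\p_y v_{s}{}_{|y=0^+}$ that your own update rule for $\tx_2'$ requires in the denominator. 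The explicit formula also lets the paper replace that denominator by $u_--w^0(\tx(t))$ (see \eqref{EDO-1}--\eqref{EDO-tx}), an explicit function of $\tx$ bounded away from zero by (H4), rather than a borderline trace of the unknown.

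Once this is in place, the paper's scheme is structurally lighter than yours: the system becomes triangular ($\ty\mapsto v$ via \eqref{eq:vs-y} with the \emph{known} source $\p_x w^0(x+\ty(t))$, then $v\mapsto u$ via the linear parabolic problem \eqref{eq:us-y}, then $u\mapsto\tz$), and the contraction is run on the scalar curve $\tx\in H^2_{\text{loc}}$ alone rather than on the full triple $(\mathfrak u,\mathfrak v,\tx)$. The price is a consistency check that you omit and that is not automatic: having replaced $\p_x w_s$ by $\p_x w^0(x+\tx(t))$ in the $v_s$-equation, one must verify a posteriori that the reconstructed $w_s=u_s-\mu\p_x\ln v_s$ really equals $w^0(x+\tx(t))$; the paper does this by showing both functions solve the same parabolic problem \eqref{eq:ws-para} with the Robin-type boundary condition \eqref{BC-w}. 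If you insist on your fully coupled fixed point instead, you must explain where the regularity of $v_s$ comes from; as written, it does not.
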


The strategy of proof is the following. We work in the shifted variable $x-\tx(t)$. Since $(u,v)$ is expected to be constant in $x-\tx(t)<0$, we only consider the system satisfied by $(u_s, v_s)$ in the positive half-line, which reads
\begin{subnumcases}{\label{eq:NS-shifted-1}}
\partial_t v_s - \tx'(t) \p_x v_s- \partial_x u_s = 0,\quad  t>0,\ x>0\label{eq:NS-shifted-1-v}\\
\partial_t u_s - \tx'(t) \p_x u_s - \mu \partial_x\left(\dfrac{1}{v_s}\partial_x u_s \right) = 0,\quad  t>0,\ x>0\label{eq:NS-shifted-1-u}\\
(v_s,u_s)_{|x=0}= ( 1 ,u_-), \quad \lim_{x\to \infty} (v_s(t,x),u_s(t,x)) = (v_+,u_+) \quad \forall \ t > 0.
\end{subnumcases}
Of course, the dynamics of $\tx$ is coupled with the dynamics of $(v_s, u_s)$. 
In order to construct a solution of \eqref{eq:NS-shifted-1}, it will be more convenient to modify the equation on $v_s$ in order to make the regularizing effects of the diffusion more explicit. Indeed, setting $w_s= u_s - \mu \p_x \ln v_s$, we find that equation \eqref{eq:NS-shifted-1-v} can be written as
\[
\partial_t v_s - \tx'(t) \p_x v_s- \mu\partial^2_x \ln v_s  = \partial_x w_s,\quad t>0,\ x>0.
\] 
Moreover,
%Combining the two equations above and setting $w_s= u_s - \mu \p_x \ln v_s$, we find that
\begin{equation}\label{eq:ws}
\p_t w_s - \tx'(t) \p_x w_s = 0,\quad t>0,\ x>0,
\end{equation}
therefore $w_s(t,x)= w^0 (x + \tx(t))$ for all $t>0, x>0$ provided $\tx'(t)>0$ for all $t>0$. In particular, letting $x\to 0^+$, we obtain
\be\label{EDO-1}
u_- - \mu \p_x v_{s|x=0^+}= w^0(\tx(t)).
\ee
Now, taking the trace of  \eqref{eq:NS-shifted-1-v} on $x=0^+$, we have
\be\label{EDO-2}
\tx'(t) \p_x v_{s|x=0^+}= - \p_x u_{s|x=0^+}
.\ee
Gathering \eqref{EDO-1} and \eqref{EDO-2} leads to
\be\label{EDO-tx}
\tx'(t) = -\frac{\p_x u_{s|x=0^+}}{ \p_x v_{s|x=0^+}}= -\mu  \frac{ \p_x u_{s|x=0^+}}{u_- - w^0(\tx(t))},
\ee
which links the dynamics of the interface to $(v_s,u_s)$.

% In order to construct a solution of \eqref{eq:NS-shifted-1}, it will be more convenient to modify the equation on $v_s$ in order to make the regularizing effects of the diffusion more explicit. Indeed, 
Since $w(t)=w^0(\cdot + \tx(t))$, the equation on $v_s$ rewrites 
\be\label{eq:vs}\ba
\p_t v_s - \tx'(t) \p_x v_s - \mu \p_{x x} \ln v_s= \p_x w^0(x+ \tx(t)),\quad t>0, \ x>0,\\
v_{s|x=0}=1,\quad \lim_{x\to \infty} v_s(t,x)= v_+,\\
v_{s|t=0}=v^0.
\ea\ee
Thus we will build a solution $(\tx, v_s, u_s)$ of \eqref{EDO-tx}-\eqref{eq:vs}-\eqref{eq:NS-shifted-1-u} thanks to the following fixed point argument:
\begin{enumerate}
    \item For any given $\ty\in H^2_\text{loc}(\R_+)$, such that $\ty(0) = 0$ and $\ty'(0)= - \frac{\p_x \tu^0_{|x=0^+}}{\partial_x \tv^0_{|x=0^+}}$, we consider the solution $v$ of the equation
    \be\label{eq:vs-y}\ba
\p_t v - \ty'(t) \p_x v - \mu \p_{x x} \ln v = \p_x w^0(x+ \ty(t)),\quad t>0, \ x>0,\\
v_{|x=0}=1,\quad \lim_{x\to \infty} v(t,x)= v_+,\\
v_{|t=0}=v^0.
\ea\ee
    We prove that under suitable conditions on the initial data, there exists a unique solution $v \in \bv + L^\infty_\text{loc}(\R_+,H^1(\R_+))$, and we derive higher regularity estimates ({\it cf} the regularities announced in~\eqref{reg-uv}-\eqref{reg-dtuv}).
    \item We then consider the unique solution $u\in \bu + L^\infty_\text{loc}(\R_+, H^1(\R_+))$ of
    \be\label{eq:us-y}
    \ba
    \partial_t u - \ty'(t) \p_x u - \mu \partial_x\left(\dfrac{1}{v}\partial_x u\right) = 0\quad t>0, \ x>0,\\
    u_{|x=0}= u_-, \quad \lim_{x\to \infty} u(t,x)= u_+,\\
    u_{|t=0}=u^0,
\ea\ee
    where $v$ is the solution of \eqref{eq:vs-y}. Once again, we derive regularity estimates on $u$ ({\it cf}~\eqref{reg-uv}-\eqref{reg-dtuv}).
    
    \item Eventually, we define
    \[
    \tilde z(t):=-\mu \int_0^t  \  \frac{ \p_x u(\tau, 0)}{u_- - w^0(\tilde y(\tau))}d\tau,
    \]
    and we consider the application $\mathcal A:\tilde y \in H^2_\text{loc}(\R_+) \mapsto \tilde z \in H^2_\text{loc}(\R_+)$.
    
    We prove that  for $T>0$ small enough the application $\mathcal A$ is a contraction, and therefore has a unique fixed point. 
\end{enumerate}

We then need to check that the solution $(\tx, v_s, u_s)$ of \eqref{EDO-tx}-\eqref{eq:vs}-\eqref{eq:NS-shifted-1-u} provided by the fixed point of $\mathcal A$ is indeed a solution of the original problem. 
Since system \eqref{eq:NS-shifted-1} has been modified, this is not completely obvious. In fact, we need to check that the function $w_s=u_s-\mu \ln v_s$ is indeed equal to $w^0 (x+\tx(t))$. To that end, let us compute the equation satisfied by $w_s$ if $v_s$ is the solution of \eqref{eq:vs} and if $u_s$ is the solution of \eqref{eq:NS-shifted-1-u}.
Combining \eqref{eq:vs} and \eqref{eq:NS-shifted-1-u}, we have
\be\label{eq:ws-para}
\p_t w_s - \tilde x'(t) \p_x w_s -\mu \p_x\left(\frac{1}{v_s}\p_x w_s\right)=-\mu \p_x\left(\frac{1}{v_s}\p_x \tw^0(x+\tx(t))\right).
\ee
Furthermore, the condition $u_{s|x=0^+}=u_-$ ensure that
\[
\tw_{s|x=0^+}= u_-- \mu \p_x \tv_{s |x=0^+},
\]
and using the equation \eqref{eq:vs} together with~\eqref{EDO-tx}
\begin{align*}
\p_x w_{s|x=0^+}
& = \p_x u_{s|x=0^+} - \mu \p_{xx} \ln v_{s|x=0^+} \\
& = \frac{\tw^0(\tx(t))-u_-}{\mu}\tx'(t) + \tx'(t) \p_x v_{s|x=0^+} + \p_x \tw^0(\tx(t)).
\end{align*}
Taking a linear combination of these two equations leads to
\be\label{BC-w}
\mu \p_x w_{s|x=0^+} + \tx'(t) w_{s|x=0^+}
= \tx'(t)\tw^0(\tx(t)) + \mu \p_x \tw^0(\tx(t)).
\ee
It can be easily proved that the solution of \eqref{eq:ws-para}-\eqref{BC-w} endowed with the initial data $\tw^0$ is the  function $\tw^0(x+\tx(t))$.
Thus the function $(v_s,u_s)$ constructed as the solution of \eqref{eq:vs}-\eqref{eq:NS-shifted-1-u}, where $\tx$ is the solution of \eqref{EDO-tx}, is in fact a solution of \eqref{eq:NS-shifted-1}. We extend this solution in $x<0$ by setting $v_s(t,x)=1$, $u_s(t,x)=u_-$, and we set
\[
p_s(t,x)=-\mu\p_x \tu_{s|x=0^+}= \tx'(t)(u_--\tw^0(\tx(t)))\quad \forall x<0.
\]
Eventually, we come back to the original variables and set 
$(v,u,p)(t,x)=(v_s,u_s,p_s)(t,x-\tx(t)).$
Then it is easily checked that $(v,u,p)$ is a solution of the original system \eqref{eq:NS-lim}.

\medskip
\begin{rmk}[About the regularity of $\tx$]
In the above discussion, we have claimed that we will prove the existence of a fixed point $\tx$ in $H^2_{loc}(\R_+)$. 
Let us discuss why this regularity is required on $\tx$.
First, we need a control of $\tx'$ in $L^\infty(\R_+)$ in order to control the transport equation~\eqref{eq:ws} satisfied by $w_s$.
%use some maximum principle arguments for the $L^\infty$-control of the specific volume $v_s$ (see Lemma~\ref{lem:infty-est}). 
Next, we see in~\eqref{EDO-tx} that the control in $L^\infty$ of $\tx$ requires a bound on $\partial_x u_s$ in $L^\infty(\R_+ \times \R_+)$, while this latter bound would a priori rely on a control of $\tx''$ in $L^2_{loc}(\R_+)$ .
%(see Proposition~\ref{prop:est-gal} in the Appendix).
Therefore the regularity $\tx \in H^2_{loc}(\R_+)$ is the minimal regularity which allows us to formally close the fixed point argument with the regularity~\eqref{reg-uv}-\eqref{reg-dtuv} on $u-\bar u$.
\end{rmk}

%%%%%%%%%%%%%%%%%%%%%%%%%%%%%%%

\section*{Acknowledgement}
This project has received funding from the European Research Council (ERC) under the European Union's Horizon 2020 research and innovation program Grant agreement No 637653, project BLOC ``Mathematical Study of Boundary Layers in Oceanic Motion’’. This work was supported by the SingFlows and CRISIS projects, grants ANR-18-CE40-0027 and ANR-20-CE40-0020-01 of the French National Research Agency (ANR).
A.-L. D. acknowledges the support of the Institut Universitaire de France.
This material is based upon work supported by the National Science Foundation under Grant No. DMS-1928930 while the authors participated in a program hosted by the Mathematical Sciences Research Institute in Berkeley, California, during the Spring 2021 semester.

\bibliography{bibli-DP}
\end{document}